\documentclass[leqno]{article}
\usepackage[T1]{fontenc}
\usepackage[utf8]{inputenc}
\usepackage[english]{babel}
\usepackage{geometry}
\usepackage{graphicx}
\usepackage{mathtools}        
\usepackage{amsfonts,amssymb,mathrsfs,amscd,amsthm}
\usepackage{verbatim}
\usepackage{url}
\usepackage{stmaryrd}
\usepackage{hyperref}

\theoremstyle{plain}

\newtheorem{thm}{Theorem}[section]
\newtheorem{prop}[thm]{Proposition}
\newtheorem{cor}[thm]{Corollary}

\newtheorem{lem}[thm]{Lemma}

\theoremstyle{definition}
\newtheorem{conj}[thm]{Conjecture}
\newtheorem{examp}[thm]{Example}

\newtheorem{dfn}[thm]{Definition}
\newtheorem{rk}[thm]{Remark}

\newtheorem{notation}[thm]{Notation}

\renewcommand{\a}{\alpha}
\renewcommand{\b}{\beta}

\newcommand{\D}{\Delta}
\newcommand{\g}{\gamma}
\newcommand{\e}{\varepsilon}
\renewcommand{\k}{\varkappa}

\newcommand{\cR}{\mathcal{R}}
\newcommand{\cM}{\mathcal{M}}
\newcommand{\cT}{\mathcal{T}}
\newcommand{\cS}{\mathcal{S}}
\newcommand{\cGH}{\mathcal{GH}}
\newcommand{\cC}{\mathcal{C}}

\newcommand{\N}{\mathbb{N}}

\newcommand{\R}{\mathbb{R}}
\newcommand{\Z}{\mathbb{Z}}

\newcommand{\diam}{{\operatorname{diam}}}
\newcommand{\dis}{{\operatorname{dis}}}
\newcommand{\codis}{{\operatorname{codis}}}
\newcommand{\dil}{{\operatorname{dil}}}

\renewcommand{\:}{\colon}

\title{The Gromov--Hausdorff distance between $l^p$-products of metric spaces}
\author{Emin Abdullaev}

\begin{document}

\maketitle

\begin{abstract}
This paper studies $l^p$-products of metric spaces and provides estimates for the Gromov--Hausdorff distances between them. The case of linear products is considered separately, and sufficient conditions for attainability of the estimates are given for it. Examples of calculating the Gromov--Hausdorff distance between flat tori are given. It is proved that for any metric space $X$ of density $d(X)$, the Gromov--Hausdorff distance between it and its $l^\infty$-product (in which the number of factors corresponds to $d(X)$) is equal to half its diameter.
\end{abstract}

\tableofcontents
\newpage

\section*{Introduction}

The Gromov--Hausdorff space $\cM$, consisting of all nonempty metric compact sets considered up to isometry, is one of the central objects of modern metric geometry.

The foundations of this approach were laid by F. Hausdorff in his work \cite{Hausdorff}, who proposed a metric for closed bounded subsets of a metric space. Later, D. Edwards \cite{Edwards} and M. Gromov \cite{Gromov} extended this idea to the set of all metric spaces via isometric embeddings, leading to the creation of the Gromov--Hausdorff distance.

Although this function can take infinite values in the general case, it is a metric on the set $\cM$. It has been proven that the resulting Gromov--Hausdorff space is path-connected, complete, separable, and geodesic \cite{Gromov, BurBurIva, IvaNikTuz}.

However, directly computing the Gromov--Hausdorff distance
is an $NP$-hard \cite{NP01, NP02} problem, as it requires minimizing distortion over all possible correspondences between spaces. Therefore, finding two-sided estimates for specific classes of metric spaces is of fundamental importance.

The issues of computing and estimating the Gromov--Hausdorff distance for various classes of metric spaces have been studied in detail in many papers. For example, G. Adams \cite{Adams} and F. Memoli \cite{LimMemoly} obtained estimates for spheres, and in the works \cite{IvaTuzSimplex, GrigIvaTuz} a technique was developed that made it possible to calculate distances for a wide class of objects, including simplices and ultrametric spaces. In the work \cite{IvaTuzBorsuk}, using the Borsuk number, sufficient conditions were given for the attainability of the upper bound (that is, when the Gromov--Hausdorff distance between two metric spaces is equal to half the maximum of their diameters). Other examples include calculations of the Gromov--Hausdorff distance between a segment and a circle \cite{JiTuzhilin} and between $\Z^2$ and $\R^2$ with the Euclidean metric \cite{MikhAsymptDim}.

The concept of the continuous Gromov--Hausdorff distance received its modern development in the work \cite{LimMemoly}.
An important stage in the development of the theory was the work of S. A. Bogaty and A. A. Tuzhilin \cite{BogatyTuzhilin} on the study of the properties of the continuous Gromov--Hausdorff distance.
An alternative version of the continuous Gromov--Hausdorff distance is also considered in the work \cite{LeeMorales} in the context of comparing dynamical systems. In this work, the authors use a construction that does not possess the triangle inequality property, which determines the specificity of their approach.

This paper is devoted to obtaining estimates for the Gromov--Hausdorff distance between $l^p$-products of metric spaces. In Section 2, two-sided estimates for $l^p$-products in the general case are obtained, and some of these results are extended to the continuous Gromov--Hausdorff distance. In Section 3, the upper bound is improved for linear $l^p$-products and sufficient conditions are given for this bound to be sharp. In particular, examples are given for $l^1$-products and for flat tori, where a formula for computing the distance is obtained.
Section 4 considers the Gromov--Hausdorff distance between a metric space and its $l^{\infty}$-product with itself and proves that
for any metric space $X$ of density $d(X)$, the Gromov--Hausdorff distance between it and its $l^\infty$-product (in which the number of factors corresponds to $d(X)$) is equal to half its diameter. In particular, if $X$ is separable, then the Gromov--Hausdorff distance between it and its countable $l^\infty$-product is equal to half its diameter. Section 5 provides an example of applying these results to the Gromov-Hausdorff topological distance.

\section*{Acknowledgments}

The author expresses gratitude to his supervisor, Doctor of Physical and Mathematical Sciences, Professor A.A. Tuzhilin, and Doctor of Physical and Mathematical Sciences, Professor A.O. Ivanov, for posing the problem and their continued attention to the work.

\section{Basic Definitions and Preliminary Results}

The Gromov--Hausdorff distance is a standard metric on the set of isometric classes of nonempty compact metric spaces.

Let $\cGH$ be the class of all nonempty metric spaces, and $\cM \subset \cGH$ be the class of all compact metric spaces.
For arbitrary $X, Y \in \cGH$, let $d_X$ and $d_Y$ denote their metrics, and let $\diam(X) = \sup_{x, y \in X} d_X(x, y)$ be the diameter of the space. A \emph{correspondence} between $X$ and $Y$ is a subset $R \subset X \times Y$ whose projections onto $X$ and $Y$ are surjective. The set of all correspondences between $X$ and $Y$ will be denoted by $\mathcal{R}(X, Y)$.

The \emph{distortion} of a correspondence $R \in \mathcal{R}(X, Y)$ is the quantity
$$ \dis R = \sup \bigl\{ |d_X(x, x') - d_Y(y, y')| : (x, y), (x', y') \in R \bigr\}. $$
Then the Gromov--Hausdorff distance is defined as
$$ d_{GH}(X, Y) = \frac{1}{2} \inf \{ \dis R : R \in \mathcal{R}(X, Y) \}. $$

Next, following ~\cite{LimMemoly}, \cite{BogatyTuzhilin}, we define the continuous Gromov--Hausdorff distance $d_{GH}^c$. For a pair of continuous mappings $f\: X \to Y$ and $g\: Y \to X$, we define their distortions and \emph{codistortion}:
\begin{itemize}
\item $\dis(f) = \sup_{x, x' \in X} |d_X(x, x') - d_Y(f(x), f(x'))|$;
\item $\dis(g) = \sup_{y, y' \in Y} |d_Y(y, y') - d_X(g(y), g(y'))|$;
\item $\codis(f, g) = \sup_{x \in X, y \in Y} |d_X(x, g(y)) - d_Y(f(x), y)|$.
\end{itemize}
The distance $d_{GH}^c(X, Y)$ is defined as
$$ d_{GH}^c(X, Y) = \frac{1}{2} \inf_{f, g} \max \{ \dis f, \dis g, \codis(f, g) \}, $$
where the infimum is taken over all continuous mappings $f \: X \to Y$ and $g \: Y \to X$.

In what follows, when discussing the properties of the Gromov--Hausdorff distance, we follow \cite{BurBurIva} and \cite{BogatyTuzhilin}.

For an arbitrary metric space $(X, d_X)$, we define the \emph{diameter}
$$\diam(X) = \sup_{x, \tilde{x} \in X} d(x, \tilde{x}).$$

For any bounded metric spaces $X, Y$, the following estimate holds:
$$ \frac{1}{2} |\diam(X) - \diam(Y)| \le d_{GH}(X, Y) \le d_{GH}^{c}(X, Y) \le \frac{1}{2} \max\{\diam(X), \diam(Y)\}. $$

Denote $\{pt\}$ as a metric space consisting of a single point. Then
for any metric space $X$, the following equality holds:
$$d_{GH}(X, \{pt\}) = d_{GH}^{c}(X, \{pt\}) = \frac{1}{2} \diam(X).$$

For any $X, Y \in \cGH$ and arbitrary mappings $f \: X \to Y$ and $g \: Y \to X$, we denote $R_{f, g} = \{ (x, f(x)) \ | \ x \in X)\} \cup \{ (g(y), y) \ | \ y \in Y \}$, then $R_{f, g}\in \cR(X, Y)$.
\begin{prop}[\cite{BogatyTuzhilin}]
For any $X, Y \in \cGH$ and arbitrary mappings $f \: X \to Y$ and $g \: Y \to X$, we have
$$\dis(R_{f, g}) = \max\{\dis(f), \dis(g), \codis(f, g)\}.$$
\end{prop}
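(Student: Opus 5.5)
The plan is to compute $\dis(R_{f,g})$ straight from the definition, splitting the supremum over pairs of elements of $R_{f,g}$ by which of the two parts of $R_{f,g}$ each element comes from. Every element of $R_{f,g}$ is either of the form $(x, f(x))$ with $x \in X$, or of the form $(g(y), y)$ with $y \in Y$. A pair of elements $(a,b),(a',b') \in R_{f,g}$ therefore falls into one of three types.

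In the first type both elements come from the graph of $f$. Then $|d_X(a,a')-d_Y(b,b')| = |d_X(x,x') - d_Y(f(x),f(x'))|$, and the supremum over such pairs is exactly $\dis(f)$. In the second type both come from the graph of $g$; symmetrically, the supremum is $|d_Y(y,y') - d_X(g(y),g(y'))|$ maximized, which is $\dis(g)$. In the third type, one element is $(x,f(x))$ and the other is $(g(y),y)$, in either order. By symmetry of the metrics and of the absolute value, the order does not matter. The quantity is then $|d_X(x,g(y)) - d_Y(f(x),y)|$, whose supremum over $x\in X$, $y\in Y$ is $\codis(f,g)$.

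Since the supremum of a union of sets is the maximum of the suprema, $\dis(R_{f,g})$ is the maximum of the three values, which gives the formula. This also holds when some of the quantities are infinite.

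There is no real obstacle. The only point needing care is that a pair may lie in both parts at once, for instance when $g(f(x)) = x$. That causes no problem, because we take a supremum over a union, and overlapping index sets do not change it. As a side check, $R_{f,g}$ is indeed a correspondence: its projection onto $X$ is surjective because of the graph of $f$, and its projection onto $Y$ is surjective because of the graph of $g$. Continuity of $f$ and $g$ plays no role in the argument.
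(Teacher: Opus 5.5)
Your proof is correct. You split $R_{f,g}\times R_{f,g}$ into pairs from the graph of $f$, pairs from the graph of $g$, and mixed pairs; each piece is parametrized surjectively by $X\times X$, $Y\times Y$, $X\times Y$ respectively, so its supremum is exactly $\dis(f)$, $\dis(g)$, $\codis(f,g)$, and the supremum over the union is the maximum of these — that is the standard argument. The paper itself states this proposition without proof, citing Bogaty--Tuzhilin, so there is no in-paper proof to compare against, and your direct computation is the natural proof of it.
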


\begin{dfn} Let $\{(X_{n}, d_n)\}_{n=1}^{\infty}$ be a sequence of metric spaces such that $\sum_{n=1}^{\infty}(\diam(X_n))^{p} < \infty$, where $1 \le p < \infty$. Then, on the Cartesian product $\prod_{n=1}^{\infty} X_{n}$, we introduce the metric
$$ d\bigl(\{x_n\}_{n=1}^{\infty}, \{y_n\}_{n=1}^{\infty} \bigr) = \Biggl(\sum_{n = 1}^{\infty} \bigl(d_n(x_{n}, y_{n}) \bigr)^{p} \Biggr)^{1/p}$$ and the resulting metric space $(l^{p})\prod_{n} X_{n}$ will be called the $l^p$-\emph{product} of the spaces $X_n$.
\end{dfn}

\begin{rk}\label{rk:iso}
Let $\{(X_{n, m}, d_{X_{n, m}})\}_{n, m \in \N}$ be a family of metric spaces, such that \hfill\break
$\sum_{n, m = 1}^{\infty} \diam(X_{n, m})^p < \infty$, where $1 \le p < \infty$.
Then the following spaces are isometric:
$$(l^p)\prod_{n=1}^{\infty} \Biggl( (l^p )\prod_{m=1}^{\infty} (X_{n, m}, d_{X_{n, m}})\Biggr) = (l^p)\prod_{n, m=1}^{\infty} (X_{n, m}, d_{X_{n, m}}).$$
\end{rk}

\begin{dfn}
Let $\{(X_{\alpha}, d_{\a})\}_{\alpha\in A}$ be an arbitrary family of metric spaces such that \hfill\break
$\sup_{\alpha \in A} \diam(X_{\alpha})<\infty$.
Then, on the Cartesian product $\prod_{\a \in A} X_{\a}$, we introduce the metric
$$ d\bigl(\{x_{\alpha}\}_{\alpha \in A}, \{y_{\alpha}\}_{\alpha \in A} \bigr) = \sup_{\alpha \in A} d_{\a}(x_{\alpha}, y_{\alpha})$$ and call the resulting metric space $(l^{\infty})\prod_{\alpha \in A} X_{\alpha}$ the $l^{\infty}$-\emph{product} of the spaces $X_{\a}$.
\end{dfn}

\begin{notation}
For an arbitrary set $A$, denote its cardinality by $\#A$.
\end{notation}

\begin{notation}
For an arbitrary cardinal number $\kappa$, let $\D_{\kappa}$ denote the metric space of cardinality $\#\D_{\kappa}=\kappa$ for which all nonzero distances are equal to $1$.
\end{notation}

\section{General Estimates for Distances Between Products}
We begin with a simple observation.

\begin{lem}\label{lem:matan}
Let $\{A_n \}_{n \in \N} \subset [0, +\infty)$ be a sequence of subsets of the real line. Set $A = \sum_{n=1}^{\infty} A_n := \bigl\{ \sum_{n=1}^{\infty} a_n \ | \ a_n \in A_n \bigr\}$.
Then $\sup A = \sum_{n = 1}^{\infty} \sup A_n$ and $\inf A = \sum_{n = 1}^{\infty} \inf A_n$.
\end{lem}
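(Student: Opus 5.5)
We prove the two equalities separately, working in the extended half-line $[0,+\infty]$. All $A_n$ lie in $[0,+\infty)$, so every series $\sum a_n$ with $a_n\in A_n$ has nonnegative terms and a well-defined sum in $[0,+\infty]$. The same holds for $\sum\sup A_n$ and $\sum\inf A_n$. We assume each $A_n$ is nonempty, since otherwise $A$ is empty and the statement is vacuous.

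\textbf{Supremum.} For the inequality $\sup A\le\sum_n\sup A_n$, take any element $\sum_n a_n$ of $A$. Since $a_n\le\sup A_n$ for each $n$, termwise comparison of nonnegative series gives $\sum_n a_n\le\sum_n\sup A_n$.

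For the reverse inequality, it suffices to show $\sup A\ge\sum_{n=1}^N\sup A_n-\varepsilon$ for every $N$ and every $\varepsilon>0$, and then let $N\to\infty$; this also covers the case where some $\sup A_n=+\infty$. Fix $N$ and $\varepsilon$. For $n\le N$, choose $a_n\in A_n$ with $a_n>\sup A_n-\varepsilon/N$ when $\sup A_n<\infty$, or with $a_n$ arbitrarily large when $\sup A_n=+\infty$. For $n>N$, choose any $a_n\in A_n$. Because the tail terms are nonnegative, $\sum_n a_n\ge\sum_{n\le N}a_n$, which gives the bound.

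\textbf{Infimum.} The inequality $\inf A\ge\sum_n\inf A_n$ follows by the same termwise comparison, using $a_n\ge\inf A_n$.

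The reverse inequality is the one genuinely infinite step, and the main point of the whole proof: the approximations must be made summable. If $\sum_n\inf A_n=+\infty$, there is nothing to prove. Otherwise, fix $\varepsilon>0$ and choose $a_n\in A_n$ with $a_n<\inf A_n+\varepsilon 2^{-n}$. Then
$$\sum_n a_n\le\sum_n\inf A_n+\varepsilon,$$
and this element belongs to $A$. Letting $\varepsilon\to0$ finishes the proof.
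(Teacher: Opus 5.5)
Your proof is correct. The paper gives no proof of this lemma; it introduces it as ``a simple observation'' and uses only the supremum part, in Lemma~\ref{lem:zero}, where every series converges. Your argument therefore supplies the missing details in the standard way:
\begin{itemize}
\item termwise comparison gives the two easy inequalities;
\item truncating at $N$ gives $\sup A\ge\sum_n\sup A_n$;
\item summable $\varepsilon 2^{-n}$ approximations give $\inf A\le\sum_n\inf A_n$.
\end{itemize}

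Two remarks on your setup.

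\textbf{Working in $[0,+\infty]$.} Reading $A$ as a subset of $[0,+\infty]$, with divergent series counted as $+\infty$, is not just a convenience. It is what makes the statement true as written. If only convergent series were admitted, then $A_n=\{1\}$ for all $n$ would give $A=\emptyset$ while $\sum_n\sup A_n=+\infty$.

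\textbf{Nonemptiness.} Nonemptiness of every $A_n$ is a genuine implicit hypothesis, not a vacuous case. If $A_1=\emptyset$ and $A_2=\{1\}$, then $A=\emptyset$. The supremum identity is then either meaningless (if $\sup\emptyset=-\infty$) or false (if $\sup\emptyset=0$ in $[0,+\infty]$). So you should state nonemptiness as an assumption rather than calling the excluded case vacuous.
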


The following lemma estimates the Gromov--Hausdorff distance between products of metric spaces in terms of the distance between their components.

\begin{lem}\label{lem:zero}
Let $\{(X_{n}, d_n)\}_{n=1}^{\infty}$ and $\{(Y_{n}, \varrho_n)\}_{n=1}^{\infty}$ be sequences of metric spaces such that \hfill\break
$\sum_{n = 1}^{\infty} \bigl(\diam(X_n) \bigr)^{p} < \infty$ and
$\sum_{n = 1}^{\infty} \bigl(\diam(Y_n) \bigr)^{p} < \infty$, where $1 \le p < \infty$.
For any $R_n\in \cR(X_n,Y_n)$, we can think of the Cartesian product $R=\prod_{n}R_n$ as a correspondence in \hfill\break $\cR\bigl((l^p)\prod_n X_n, (l^p) \prod_n Y_n \bigr)$. Then
$$\dis(R) \le \biggl(\sum_{n}\bigl(\dis(R_n)\bigr)^p\biggr)^{1/p}.$$
\end{lem}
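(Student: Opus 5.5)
The plan is to bound the distortion of $R$ pointwise and then take the supremum. First I will check that $R=\prod_n R_n$ really is a correspondence. Given $x=\{x_n\}$, surjectivity of each projection $R_n\to X_n$ lets us choose $y_n$ with $(x_n,y_n)\in R_n$ for every $n$. This uses the axiom of choice for the countable family. Then $(x,\{y_n\})\in R$, and the point $\{y_n\}$ lies in the $l^p$-product because every point of the Cartesian product does: distances are finite, since $d_n(x_n,y_n)\le\diam(X_n)$ and $\sum\diam(X_n)^p<\infty$. The argument for $Y$ is symmetric.

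Next, fix $(x,y),(x',y')\in R$ with $x=\{x_n\}$, $y=\{y_n\}$, and so on, where $(x_n,y_n),(x'_n,y'_n)\in R_n$. Put $a=\{d_n(x_n,x'_n)\}_n$ and $b=\{\varrho_n(y_n,y'_n)\}_n$. Both are elements of the sequence space $l^p$, their $l^p$-norms are exactly $d(x,x')$ and $\varrho(y,y')$, and both norms are finite by the diameter assumptions. The key step is the reverse triangle inequality for the $l^p$-norm, which is a consequence of Minkowski's inequality:
$$\bigl|\|a\|_p-\|b\|_p\bigr|\le\|a-b\|_p=\Bigl(\sum_n|d_n(x_n,x'_n)-\varrho_n(y_n,y'_n)|^p\Bigr)^{1/p}.$$
Each term satisfies $|d_n(x_n,x'_n)-\varrho_n(y_n,y'_n)|\le\dis(R_n)$ by the definition of distortion, since both pairs lie in $R_n$. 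Hence
$$\bigl|d(x,x')-\varrho(y,y')\bigr|\le\Bigl(\sum_n(\dis R_n)^p\Bigr)^{1/p}.$$
This holds for every choice of the two pairs, so taking the supremum over all pairs in $R$ gives the claim.

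I do not expect a real obstacle. The only points needing care are the following:
\begin{itemize}
\item Minkowski's inequality must be applied to infinite sequences; the vectors $a$ and $b$ are in $l^p$, so this is fine.
\item The right-hand side may be $+\infty$, in which case the inequality is trivial.
\end{itemize}
In fact the right-hand side is automatically finite, because $\dis R_n\le\max(\diam X_n,\diam Y_n)$.
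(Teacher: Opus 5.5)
Your proof is correct and follows the paper's route: the reverse triangle inequality $\bigl|\|a\|_{l^p}-\|b\|_{l^p}\bigr|\le\|a-b\|_{l^p}$ applied to the coordinatewise distance sequences, followed by bounding each coordinate by $\dis(R_n)$. The paper routes the last step through Lemma~\ref{lem:matan} to evaluate that supremum exactly, but your termwise bound is all the inequality requires. One small slip: in your finiteness check the relevant bound is $\varrho_n(y_n,y'_n)\le\diam(Y_n)$, not $d_n(x_n,y_n)\le\diam(X_n)$.
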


\begin{proof}
Let us put $(X, d_X) = (l^{p})\prod_{n} (X_{n}, d_n)$ and $(Y, d_Y) = (l^{p})\prod_{n} (Y_{n}, d_n)$.
Then we get that
\begin{multline*} \dis(R) =
\dis \biggl(\prod_{n} R_{n} \biggr) =
\sup_{(x, y), (\tilde{x}, \tilde{y}) \in R}
\bigl| d_X(x, \tilde{x}) - d_Y(y, \tilde{y}) \bigr| = \\
\
= \sup_{(x, y), (\tilde{x}, \tilde{y}) \in R}
\Bigl| \|{\{d_n(x_{n}, \tilde{x}_n)\}_{n=1}^{\infty}}\|_{l^p} - \|{\{\varrho_n(y_{n}, \tilde{y}_n)\}_{n=1}^{\infty}} \|_{l^p} \Bigr| \le \\
\
\le
\sup_{(x, y), (\tilde{x}, \tilde{y}) \in R}
 \| \{d_n(x_{n}, \tilde{x}_{n}) - \varrho_n(y_{n}, \tilde{y}_{n}) \}_{n=1}^{\infty} \|_{l^{p}},
\end{multline*}
where the last inequality is the inequality
$\bigl|\|a\|-\|b\|\bigr| \le \|a-b\|$ for the $l^p$-norm and vectors $a_n = d_n(x_n, \tilde{x}_n), \ b_n = \varrho_n(y_n, \tilde{y}_n)$.

Let's put
$$A_n = \Bigl\{ \bigl| d_n(x_{n}, \tilde{x}_{n}) - \varrho_n(y_{n}, \tilde{y}_{n}) \bigr|^{p} \ \Big|
\ (x_n, y_n), (\tilde{x}_n, \tilde{y}_n) \in R_n \Bigr\},$$
$$A = \biggl\{ \sum_{n=1}^{\infty} \bigl| d_n(x_{n}, \tilde{x}_{n}) - \varrho_n(y_{n}, \tilde{y}_{n}) \bigr|^{p} \ \Big|
\ (x, y), (\tilde{x}, \tilde{y}) \in R \biggr\}.$$
Applying the lemma ~\ref{lem:matan}, we find that
\begin{multline*}
\dis(R)\le
\sup_{(x, y), (\tilde{x}, \tilde{y}) \in R}
\Biggl( \sum_{n=1}^{\infty} \left| d_n(x_{n}, \tilde{x}_{n}) - \varrho_n(y_{n}, \tilde{y}_{n}) \right|^{p} \Biggr)^{1/p} = \\
\
= \Biggl(
\sum_{n=1}^{\infty} \biggl(
\sup_{(x_n, y_n), (\tilde{x}_n, \tilde{y}_n) \in R_n}
\bigl| d_n(x_{n}, \tilde{x}_{n}) - \varrho_n(y_{n}, \tilde{y}_{n}) \bigr|
\biggr)^{p} \Biggr)^{1/p}
= \Biggl( \sum_{n=1}^{\infty} \bigl( \dis(R_{n}) \bigr)^{p} \Biggr)^{1/p}.
\end{multline*}
\end{proof}

\begin{lem}\label{lem:EstimationInGeneralCase01}
Let $\{(X_{n}, d_n)\}_{n=1}^{\infty}$ and $\{(Y_{n}, \varrho_n)\}_{n=1}^{\infty}$ be sequences of metric spaces such that \hfill\break
$\sum_{n = 1}^{\infty} \bigl(\diam(X_n) \bigr)^{p} < \infty$ and $\sum_{n = 1}^{\infty} \bigl(\diam(Y_n) \bigr)^{p} < \infty$, where $1 \le p < \infty$.
Then
$$d_{GH}\biggl((l^{p})\prod_{n} X_{n}, \ (l^{p})\prod_{n} Y_{n} \biggr) \le \Biggl( \sum_{n} (d_{GH} \bigl(X_{n}, Y_{n})\bigr)^{p} \Biggr)^{1/p}.$$
\end{lem}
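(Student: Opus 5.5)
The plan is to deduce the estimate directly from Lemma~\ref{lem:zero} by choosing nearly optimal correspondences coordinatewise. If the right-hand side is infinite there is nothing to prove, so assume $S := \sum_n \bigl(d_{GH}(X_n,Y_n)\bigr)^p < \infty$. (In fact $S$ is automatically finite: $d_{GH}(X_n,Y_n) \le \frac12\max\{\diam X_n, \diam Y_n\}$ and both diameter series converge.) Fix $\e > 0$ and a sequence $\e_n > 0$ with $\bigl(\sum_n \e_n^p\bigr)^{1/p}$ small; for instance $\e_n = \e\, 2^{-n}$.

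For each $n$, the definition of $d_{GH}$ as an infimum lets us pick $R_n \in \cR(X_n, Y_n)$ with
$$\dis(R_n) \le 2\, d_{GH}(X_n,Y_n) + 2\e_n .$$
Set $R = \prod_n R_n$. This is a correspondence between $(l^p)\prod_n X_n$ and $(l^p)\prod_n Y_n$, since projections of a product of surjective relations are surjective coordinatewise. By Lemma~\ref{lem:zero},
$$\dis(R) \le \Bigl(\sum_n \bigl(\dis R_n\bigr)^p\Bigr)^{1/p} \le 2\Bigl(\sum_n \bigl(d_{GH}(X_n,Y_n) + \e_n\bigr)^p\Bigr)^{1/p} .$$

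By Minkowski's inequality in $l^p$, the last expression is at most
$$2\Bigl(\sum_n d_{GH}(X_n,Y_n)^p\Bigr)^{1/p} + 2\Bigl(\sum_n \e_n^p\Bigr)^{1/p} \le 2 S^{1/p} + 2\e .$$
Hence $d_{GH}\bigl((l^p)\prod_n X_n, (l^p)\prod_n Y_n\bigr) \le \frac12 \dis R \le S^{1/p} + \e$. Letting $\e \to 0$ gives the claim.

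There is no serious obstacle here. The only points needing care are:
\begin{itemize}
\item making the per-coordinate slack $\e_n$ summable in $l^p$, which is handled by the geometric choice and Minkowski;
\item using the axiom of choice to select the $R_n$ simultaneously for all $n$.
\end{itemize}
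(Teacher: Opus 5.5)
Your proof is correct and follows the same approach as the paper. You choose near-optimal correspondences $R_n$ coordinatewise, take $R=\prod_n R_n$, and apply Lemma~\ref{lem:zero}. The explicit slack $\e_n=\e\,2^{-n}$ combined with Minkowski's inequality supplies the justification for choosing the $R_n$, a step the paper only asserts.
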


\begin{proof}
Let us put $(X, d_X) = (l^{p})\prod_{n} (X_{n}, d_n)$ and $(Y, d_Y) = (l^{p})\prod_{n} (Y_{n}, d_n)$.
For any $\e > 0$, consider $R_n \in \cR(X_n, Y_n)$ such that
$$ \bigl\| \{\dis(R_n)\}_{n=1}^{\infty} \bigr\|_{l^p} \le
2 \bigl\| \{d_{GH}(X_n, Y_n)\}_{n=1}^{\infty} \bigr\|_{l^p} + \e.$$
Set $R = \prod_{n=1}^{\infty} R_n \in \cR(X, Y)$.
Then, by the definition of the Gromov--Hausdorff distance and Lemma ~\ref{lem:zero}, we obtain
$$2d_{GH}(X, Y) \le \dis(R) \le \bigl\| \{\dis(R_n)\}_{n=1}^{\infty} \bigr\|_{l^p} \le
2\bigl\| \{d_{GH}(X_n, Y_n)\}_{n=1}^{\infty} \bigr\|_{l^p} + \e.$$
Passing to the limit at $\e \to 0$, we obtain what we need.
\end{proof}

\begin{examp}
Consider two rectangles on the Euclidean plane: $X = (l^2)[0, A] \times [0, B]$ and $Y = (l^{2})[0, C] \times [0, D]$.
Then $2d_{GH}([0, A], [0, C])=|A-C|$ and $2d_{GH}([0, B], [0, D])=|B-D|$.
From the previous lemma, we obtain that $2d_{GH}(X, Y) \le \sqrt{(A-C)^2+(B-D)^2}$.
\end{examp}

Now we obtain the inverse estimate.

\begin{lem}\label{lem:EstimationInGeneralCase02}
Let $\{X_{n}\}_{n=1}^{\infty}$ and $\{Y_{n}\}_{n=1}^{\infty}$ be sequences of metric spaces such that \hfill\break
$\sum_{n = 1}^{\infty} \bigl(\diam(X_n) \bigr)^{p} < \infty$ and
$\sum_{n = 1}^{\infty} \bigl(\diam(Y_n) \bigr)^{p} < \infty$, where $1 \le p < \infty$.
Then
\begin{multline*}
\sup_{n} \Biggl(
d_{GH}(X_{n}, Y_{n}) -
\frac{1}{2}\biggl( \sum_{m \ne n} \bigl( \diam(X_{m})\bigr)^{p} \biggr)^{1/p}
-\frac{1}{2}\biggl( \sum_{m \ne n} \bigl( \diam(Y_{m})\bigr)^{p} \biggr)^{1/p}
\Biggr) \le \\
\le d_{GH}\biggl((l^{p})\prod_{n} X_{n}, \ (l^{p})\prod_{n} Y_{n} \biggr).
\end{multline*}
\end{lem}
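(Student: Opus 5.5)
Fix an index $n$. The plan is to show that
$$d_{GH}(X_n, Y_n) \le d_{GH}(X, Y) + \tfrac12 D_X^{(n)} + \tfrac12 D_Y^{(n)},$$
where $X=(l^p)\prod_m X_m$, $Y=(l^p)\prod_m Y_m$, and
$$D_X^{(n)}=\Bigl(\sum_{m\ne n}\diam(X_m)^p\Bigr)^{1/p},\qquad D_Y^{(n)}=\Bigl(\sum_{m\ne n}\diam(Y_m)^p\Bigr)^{1/p}.$$
Taking the supremum over $n$ then gives the claim. The cleanest route is the triangle inequality for $d_{GH}$:
$$d_{GH}(X_n,Y_n)\le d_{GH}(X_n,X)+d_{GH}(X,Y)+d_{GH}(Y,Y_n).$$
This reduces everything to the single estimate $d_{GH}(X_n,X)\le \frac12 D_X^{(n)}$, together with its analogue for $Y$.

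To prove $d_{GH}(X_n,X)\le\frac12 D_X^{(n)}$, I would use the natural projection correspondence. Let $\pi_n\colon X\to X_n$ be the coordinate projection and let
$$R=\{(x_n,x): x\in X\}\in\cR(X_n,X).$$
It is surjective onto $X_n$ because $X$ is a nonempty product. For $x,\tilde x\in X$, write $a=d_n(x_n,\tilde x_n)$ and $b=\bigl(\sum_{m\ne n}d_m(x_m,\tilde x_m)^p\bigr)^{1/p}$. Then $d_X(x,\tilde x)=(a^p+b^p)^{1/p}$. Minkowski's inequality in $\R^2$ with the $l^p$-norm gives
$$0\le (a^p+b^p)^{1/p}-a\le b\le D_X^{(n)}.$$
Hence $\dis R\le D_X^{(n)}$, and so $d_{GH}(X_n,X)\le \frac12 D_X^{(n)}$. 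The same argument with $Y$ gives $d_{GH}(Y_n,Y)\le\frac12 D_Y^{(n)}$.

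I do not expect a serious obstacle. The only points needing care are convergence and finiteness issues, and these are routine.

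First, $d_{GH}$ satisfies the triangle inequality on all of $\cGH$, possibly with infinite values. Here all spaces are bounded, since the diameter series converge, so every quantity involved is finite. If one prefers not to invoke the triangle inequality, one can instead compose correspondences directly: take $R'\in\cR(X,Y)$ with $\dis R'\le 2d_{GH}(X,Y)+\e$, push it to $\{(x_n,y_n):(x,y)\in R'\}\in\cR(X_n,Y_n)$, and bound its distortion by $\dis R'+D_X^{(n)}+D_Y^{(n)}$ using the same Minkowski estimate on each side. Letting $\e\to0$ then gives the result.

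Second, the sup over $n$ of the left-hand side is harmless even if some terms are negative, since the bound holds for each $n$ separately.
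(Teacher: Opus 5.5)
Your proof is correct and is essentially the paper's argument in different packaging. Composing your projection correspondences $\{(x_n,x)\}$ and $\{(y,y_n)\}$ with a near-optimal $R\in\cR(X,Y)$ gives exactly the paper's projected correspondence $P^{R}_n\in\cR(X_n,Y_n)$, and your Minkowski bound $0\le (a^p+b^p)^{1/p}-a\le b$ is the same $l^p$ reverse-triangle estimate the paper uses to bound $\bigl|d_X(x,\tilde x)-d_{X_n}(x_n,\tilde x_n)\bigr|$ by $\bigl(\sum_{m\ne n}\diam(X_m)^p\bigr)^{1/p}$; your fallback route is literally the paper's proof.
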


\begin{proof}
Set $X = (l^{p})\prod_{n} X_{n}$ and $Y = (l^{p})\prod_{n} Y_{n}$.
Note that any correspondence $R \in \cR(X, Y)$ can be projected onto $X_{n} \times Y_{n}$ to obtain a correspondence $P^{R}_{n} \in \cR(X_{n}, Y_{n})$. In other words, a pair $(\hat{x}_{n}, \hat{y}_{n})$ is contained in $P^{R}_{n}$ if and only if there exists a pair $(x,y) \in R$ such that $x_n = \hat{x}_{n}$ and $\ y_{n} = \hat{y}_{n}$.
Further, we will denote the metric in the spaces $X_n$, $Y_{n}$, $X$, and $Y$ by $d_{X_n}$, $d_{Y_n}$, $d_{X}$, and $d_{Y}$, respectively.
Then
\begin{multline*}
d_{GH}(X_{n}, Y_n) = \frac{1}{2} \inf_{R \in \cR(X_{n}, Y_{n})} \dis(R) = \frac{1}{2} \inf_{R \in \cR(X, Y)} \dis(P_{n}^{R}) =  \\
\
= \frac{1}{2} \inf_{R \in \cR(X, Y)} \sup_{\substack{(x, y) \in R \\ (\tilde{x}, \tilde{y}) \in R}}
\bigl| d_{X_{n}}(x_{n}, \tilde{x}_{n}) - d_{Y_{n}}(y_{n}, \tilde{y}_{n}) \bigr| = \\
\
= \frac{1}{2} \inf_{R \in \cR(X, Y)} \sup_{\substack{(x, y) \in R \\ (\tilde{x}, \tilde{y}) \in R}}
\Bigl| \bigl(d_{X}(x, \tilde{x}) - d_{Y}(y, \tilde{y}) \bigr) +
\bigl(d_{X_{n}}(x_{n}, \tilde{x}_{n}) - d_{X}(x, \tilde{x}) \bigr) +
\bigl(d_{Y}(y, \tilde{y}) - d_{Y_{n}}(y_{n}, \tilde{y}_{n}) \bigr) \Bigr|.
\end{multline*}
First, we evaluate the resulting expression using the triangle inequality. Then we write out the inequality $\bigl|\|a\|_{l^p}-\|b\|_{l^p}\bigr|\le \|a-b\|_{l^p}$, written first for $a_m=d_{Y_m}(x_m, \tilde{x}_m)$ and $b_m=d_n(x_n, \tilde{x}_n)$ for $m=n$ and $b_m=0$ for $m\ne n$, and then for
$a_m=d_{Y_m}(y_m, \tilde{y}_m)$ and $b_m=d_n(y_n, \tilde{y}_n)$ for $m=n$ and $b_m=0$ for $m\ne n$. Thus, we obtain
\begin{multline*}
d_{GH}(X_n, Y_n) \le \frac{1}{2} \inf_{R \in \cR(X, Y)} \sup_{\substack{(x, y) \in R \\ (\tilde{x}, \tilde{y}) \in R}}
\bigl| d_{X}(x, \tilde{x}) - d_{Y}(y, \tilde{y}) \bigr|
+ \\ + \frac{1}{2} \sup_{\substack{x, \tilde{x} \in X \\ y, \tilde{y} \in Y}}
\Biggl(
\biggl| \Bigl( \sum_{m = 1}^{\infty} \bigl( d_{X_{m}}(x_{m}, \tilde{x}_{m}) \bigr) ^{p} \Bigr)^{1/p} - d_{X_{n}}(x_{n}, \tilde{x}_{n}) \biggr| + \\
+ \biggl| \Bigl( \sum_{m = 1}^{\infty} \bigl( d_{Y_{m}}(y_{m}, \tilde{y}_{m}) \bigr) ^{p} \Bigr)^{1/p} - d_{Y_{n}}(y_{n}, \tilde{y}_{n}) \biggr|
\Biggr) \le \\
\
\le d_{GH}(X, Y) +
\frac{1}{2}
\sup_{\substack{x, \tilde{x} \in X \\ y, \tilde{y} \in Y}}
\Biggl( \sum_{m \ne n} \bigl( d_{X_m}(x_m, \tilde{x}_m) \bigr)^{p} \Biggr)^{1/p} +
\sup_{\substack{x, \tilde{x} \in X \\ y, \tilde{y} \in Y}}
\frac{1}{2} \Biggl( \sum_{m \ne n} \bigl( d_{Y_m}(y_m, \tilde{y}_m)) \bigr)^{p} \Biggr)^{1/p} \le \\
\
\le d_{GH}(X, Y) +
\frac{1}{2} \Biggl( \sum_{m \ne n} \bigl( \diam(X_{m}) \bigr)^{p} \Biggr)^{1/p} +
\frac{1}{2} \Biggl( \sum_{m \ne n} \bigl( \diam(Y_{m}) \bigr)^{p} \Biggr)^{1/p}.
\end{multline*}
Thus, for each $n$, we get
$$d_{GH}(X_{n}, Y_{n}) -
\frac{1}{2}\biggl( \sum_{m \ne n} \bigl( \diam(X_{m})\bigr)^{p} \biggr)^{1/p}
-\frac{1}{2}\biggl( \sum_{m \ne n} \bigl( \diam(Y_{m})\bigr)^{p} \biggr)^{1/p} \le
d_{GH}(X, Y), $$
which proves the lemma.
\end{proof}

\begin{examp}
Let $X, Y$ be metric spaces of the same diameter $\diam(X) = \diam(Y) = \alpha < 1/2$, and let $\kappa \ne \kappa'$ be two different cardinal numbers. We denote $\D=\D_{\kappa}$ and $\D'=\D_{\kappa'}$. It is well known that $2d_{GH}(\Delta_{}, \Delta')=1$. We set $X_1 = \Delta, \ X_2 = X, \ Y_1 = \Delta', \ Y_2 = Y$. Then, by Lemma ~\ref{lem:EstimationInGeneralCase02} we obtain
\begin{multline*}
2d_{GH} \bigl((l^{p}) \ X \times \Delta, (l^{p}) \ Y \times \Delta' \bigr) =
2d_{GH}((l^p)X_1 \times X_2, (l^p) Y_1 \times Y_2) \ge \\
\ge 2d_{GH}(X_1, Y_1) - \diam(X_2) - \diam(Y_2) > 1-2\alpha.
\end{multline*}
Note that in this case, the simple estimate
$0=\bigl|\diam((l^{p}) \ X \times \Delta) - \diam((l^{p}) \ Y \times \Delta') \bigr| \le 2d_{GH} \bigl((l^{p}) \ X \times \Delta, (l^{p}) \ Y \times \Delta' \bigr)$ does not imply anything (that is, this estimate is trivial).
\end{examp}

The two lemmas above yield the following theorem.
\begin{thm}\label{thm:EstimationInGeneralCase}
Let $\{X_{n}\}_{n=1}^{\infty}$ and $\{Y_{n}\}_{n=1}^{\infty}$ be sequences of metric spaces, such that \hfill\break
$\sum_{n = 1}^{\infty} \bigl(\diam(X_n) \bigr)^{p} < \infty$ and $\sum_{n = 1}^{\infty} \bigl(\diam(Y_n) \bigr)^{p} < \infty$, where $1 \le p < \infty$.
Then
\begin{multline*}
\sup_{n} \Biggl(
d_{GH}(X_{n}, Y_{n}) -
\frac{1}{2}\biggl( \sum_{m \ne n} \bigl( \diam(X_{m})\bigr)^{p} \biggr)^{1/p}
-\frac{1}{2}\biggl( \sum_{m \ne n} \bigl( \diam(Y_{m})\bigr)^{p} \biggr)^{1/p}
\Biggr) \le \\
\le d_{GH}\biggl((l^{p})\prod_{n} X_{n}, \ (l^{p})\prod_{n} Y_{n} \biggr)
\le \Biggl( \sum_{n=1}^{\infty}(d_{GH} \bigl(X_{n}, Y_{n})\bigr)^{p} \Biggr)^{1/p}.
\end{multline*}
\end{thm}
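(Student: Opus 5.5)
The theorem is just the conjunction of the two preceding lemmas, so the plan is to assemble them. Both lemmas have exactly the hypotheses of the theorem: sequences $\{X_n\}$, $\{Y_n\}$ with $\sum_n (\diam X_n)^p<\infty$ and $\sum_n (\diam Y_n)^p<\infty$ for $1\le p<\infty$. These summability conditions are what make the two $l^p$-products well-defined metric spaces, with diameters bounded by $\bigl(\sum_n (\diam X_n)^p\bigr)^{1/p}$ and $\bigl(\sum_n (\diam Y_n)^p\bigr)^{1/p}$. In particular, the Gromov--Hausdorff distance in the middle of the inequality is finite.

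\textbf{Right-hand inequality.} I will invoke Lemma~\ref{lem:EstimationInGeneralCase01} directly. To recall the mechanism: for each $\e>0$, choose correspondences $R_n\in\cR(X_n,Y_n)$ so that the $l^p$-norm of $\{\dis R_n\}$ is within $\e$ of $2\|\{d_{GH}(X_n,Y_n)\}\|_{l^p}$. Such a choice is possible by picking $\dis R_n\le 2d_{GH}(X_n,Y_n)+\e_n$ with $\|\{\e_n\}\|_{l^p}\le\e$. Then take the product correspondence and apply Lemma~\ref{lem:zero}, followed by Minkowski's inequality.

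\textbf{Left-hand inequality.} Lemma~\ref{lem:EstimationInGeneralCase02} gives, for each fixed $n$,
$$d_{GH}(X_n,Y_n)-\tfrac12\Bigl(\sum_{m\ne n}(\diam X_m)^p\Bigr)^{1/p}-\tfrac12\Bigl(\sum_{m\ne n}(\diam Y_m)^p\Bigr)^{1/p}\le d_{GH}\Bigl((l^p)\prod_n X_n,(l^p)\prod_n Y_n\Bigr).$$
The right-hand side does not depend on $n$, so taking the supremum over $n$ gives the bound.

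The only point needing any care is the degenerate case where the right-hand sum $\sum_n d_{GH}(X_n,Y_n)^p$ diverges. In that case the upper bound is trivially true with value $+\infty$. In fact it always converges, since $d_{GH}(X_n,Y_n)\le\frac12\max\{\diam X_n,\diam Y_n\}$. I expect no real obstacle here; the main work was already carried out in the two lemmas.
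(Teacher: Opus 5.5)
Your proposal is correct and matches the paper's proof: the theorem is obtained by combining Lemma~\ref{lem:EstimationInGeneralCase02} for the left inequality with Lemma~\ref{lem:EstimationInGeneralCase01}, which rests on Lemma~\ref{lem:zero}, for the right inequality. Your added details are valid and fill in points the paper leaves implicit. These are the choice of $\e_n$ with $\|\{\e_n\}\|_{l^p}\le\e$ together with Minkowski's inequality to get near-optimal $R_n$, and the convergence of $\sum_n d_{GH}(X_n,Y_n)^p$, which follows from $d_{GH}(X_n,Y_n)\le\frac12\max\{\diam X_n,\diam Y_n\}$.
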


The following theorem is proved similarly.
\begin{thm}\label{thm:EstimationInGeneralCase_L_infty}
Let $\{X_{\a} \}_{\a \in A}$ and $\{Y_{\a} \}_{\a \in A}$ be arbitrary families of metric spaces, such that \hfill\break
$\sup_{\a \in A} \diam(X_{\a})<\infty$ and $\sup_{\a \in A} \diam(Y_{\a})<\infty$.
Then
\begin{multline*}
\sup_{\a \in A} \Bigl( d_{GH}(X_{\a}, Y_{\a})-\frac{1}{2} \sup_{\b \ne \a} \diam(X_{\b})-\frac{1}{2} \sup_{\b \ne \a} \diam(Y_{\b}) \Bigr) \le \\ \le d_{GH} \Biggl((l^{\infty}) \prod_{\a \in A} X_{\a}, \ (l^{\infty})\prod_{\a \in A}Y_{\a} \Biggr) \le \sup_{\a \in A}d_{GH}(X_{\a}, {Y_{a}}).\end{multline*}
\end{thm}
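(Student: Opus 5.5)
The plan is to copy the two-step structure used for Theorem~\ref{thm:EstimationInGeneralCase}: prove the upper bound and the lower bound separately, replacing the $l^p$-norm by the supremum over the index set $A$. Write $X=(l^\infty)\prod_{\a} X_\a$ and $Y=(l^\infty)\prod_{\a} Y_\a$. Both are well defined because the diameters of the factors are uniformly bounded.

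\textbf{Upper bound.} I first prove the $l^\infty$ analogue of Lemma~\ref{lem:zero}. Take any correspondences $R_\a\in\cR(X_\a,Y_\a)$ and set $R=\prod_\a R_\a$. This is a correspondence between $X$ and $Y$ (the axiom of choice gives surjectivity of the projections), and I claim
$$\dis R\le\sup_\a \dis R_\a.$$
The key step is the elementary inequality $\bigl|\sup_\a a_\a-\sup_\a b_\a\bigr|\le\sup_\a|a_\a-b_\a|$ for bounded nonnegative families. Applying it with $a_\a=d_\a(x_\a,\tilde x_\a)$ and $b_\a=\varrho_\a(y_\a,\tilde y_\a)$ gives the claim; no analogue of Lemma~\ref{lem:matan} is needed, since suprema commute directly. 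Now fix $\e>0$ and choose each $R_\a$ with $\dis R_\a\le 2d_{GH}(X_\a,Y_\a)+\e$. Then
$$2d_{GH}(X,Y)\le\dis R\le 2\sup_\a d_{GH}(X_\a,Y_\a)+\e,$$
and letting $\e\to0$ gives the upper bound. Working with an additive $\e$ rather than a norm-dependent choice is what makes this go through for uncountable $A$.

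\textbf{Lower bound.} Fix $\a$ and an arbitrary $R\in\cR(X,Y)$. Project $R$ to the $\a$-th coordinates to get $P_\a^R\in\cR(X_\a,Y_\a)$. Then
$$2d_{GH}(X_\a,Y_\a)\le\dis P^R_\a=\sup_{(x,y),(\tilde x,\tilde y)\in R}\bigl|d_\a(x_\a,\tilde x_\a)-\varrho_\a(y_\a,\tilde y_\a)\bigr|.$$
Inside the supremum I insert $d_X(x,\tilde x)$ and $d_Y(y,\tilde y)$ and use the triangle inequality, exactly as in Lemma~\ref{lem:EstimationInGeneralCase02}. It then remains to bound $\bigl|d_X(x,\tilde x)-d_\a(x_\a,\tilde x_\a)\bigr|$.

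This bound is where the $l^\infty$ structure enters. By the same sup inequality, applied to the family $a_\b=d_\b(x_\b,\tilde x_\b)$ and the family $b$ that equals $d_\a(x_\a,\tilde x_\a)$ at $\a$ and $0$ elsewhere, the difference is at most $\sup_{\b\ne\a}d_\b(x_\b,\tilde x_\b)\le\sup_{\b\ne\a}\diam X_\b$. The same argument bounds the $Y$-term by $\sup_{\b\ne\a}\diam Y_\b$. Combining,
$$2d_{GH}(X_\a,Y_\a)\le\dis R+\sup_{\b\ne\a}\diam X_\b+\sup_{\b\ne\a}\diam Y_\b.$$
Taking the infimum over $R$, dividing by $2$, and then taking the supremum over $\a$ gives the lower bound.

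\textbf{Main obstacle.} The only real subtlety is confirming that the sup-norm reverse triangle inequality holds for arbitrary (possibly uncountable) index sets. It does, because each $a_\a\le b_\a+\sup_\g|a_\g-b_\g|$, and taking the supremum over $\a$ and then symmetrizing gives it. The degenerate case where $A$ is a single point is also fine: the supremum over $\b\ne\a$ is empty, should be read as $0$, and both bounds then reduce to equalities.
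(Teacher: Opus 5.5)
Your proposal is correct and follows the paper's route. The paper proves this theorem ``similarly'' to Theorem~\ref{thm:EstimationInGeneralCase}: the upper bound comes from the product correspondence $\prod_\a R_\a$ (the $l^\infty$ analogue of Lemma~\ref{lem:zero}), and the lower bound comes from the projected correspondences $P^R_\a$ plus the triangle inequality, as in Lemma~\ref{lem:EstimationInGeneralCase02}. Your use of the sup-form reverse triangle inequality in place of Lemma~\ref{lem:matan}, and of an additive $\e$ to handle uncountable $A$, supplies exactly the details the paper leaves implicit.
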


\begin{rk}
Note that in Theorem ~\ref{thm:EstimationInGeneralCase_L_infty}, the upper bound holds for arbitrary (even unbounded) metric spaces, as is clear from the proof.
\end{rk}

\begin{dfn}[\cite{BurBurIva}]
Let $X, Y$ be metric spaces. A mapping $f \: X \to Y$ is called \emph{Lipschitz} if there exists $C \ge 0$ such that $d_Y(f(x_1), f(x_2)) \le C d_X(x_1, x_2)$ for all $x_1, x_2 \in X$.
Any suitable value of $C$ is called a constant Lipschitz mapping of $f$. The minimum Lipschitz constant is called the \emph{extension} of $f$ and is denoted by $\dil(f)$.
\end{dfn}

\begin{rk}
\begin{enumerate}
\item
Consider the mapping $\cT_p$, which associates to each compact metric space $X$ its square, $\cT_p \: \cM \to \cM, \ X \mapsto (l^p)X \times X$.
Then the mapping's dilation $\cT_p = 2^{1/p}$ is Lipschitz continuous and, in particular, continuous.
Moreover, the dilation $\dil(\cT_p) = 2^{1/p}$.
\item
The mapping $$\cT_{\infty} \: X\mapsto (l^{\infty})\prod_{\a \in A}X$$ is 1-Lipschitz continuous and, in particular, continuous. Moreover, the dilation $\dil(\cT_{\infty}) = 1$. \end{enumerate}
\end{rk}

\begin{proof}
Let us justify the first point (the second is obtained similarly). Since, according to \ref{lem:EstimationInGeneralCase01}, $d_{GH}((l^p)X \times X, (l^p)Y \times Y) \le 2^{1/p} d_{GH}(X, Y)$, the mapping $\cT_p$
is Lipschitz continuous with constant $C \ge 2^{1/p}$. It remains to provide compact metric spaces $X, Y$ such that $d_{GH}((l^p)X \times X, (l^p)Y \times Y) = 2^{1/p} d_{GH}(X, Y)$. This will certainly be true if, for example, $X = \{pt\}$, and $Y$ is an arbitrary metric compact space.
\end{proof}

\subsection{Remarks on the theorem: the case of continuous Gromov--Hausdorff distance}

We extend some of the obtained results to the case of continuous Gromov--Hausdorff distance $d_{GH}^{c}$.
The following proposition in this case is indeed verbatim for $d_{GH}^{c}$.
\begin{prop}\label{prop:SimpleLinf}
Let $(X_1, d_{X_1}) \dots, (X_N, d_{X_N})$ and $(Y_1, d_{Y_1}) \dots, (Y_N, d_{Y_N})$ be finite sequences of metric spaces. Let $1 \le p < \infty$, denote $X = (l^p) X_1 \times \dots \times X_N, \ Y = (l^p) Y_1 \times \dots \times Y_N$.
Then
$$d_{GH}^{c}(X, Y) \le \Biggl( \sum_{n} (d_{GH}^{c} \bigl(X_{n}, Y_{n})\bigr)^{p} \Biggr)^{1/p}.$$
\end{prop}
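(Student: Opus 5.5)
We argue as in Lemma~\ref{lem:EstimationInGeneralCase01}, working with pairs of continuous maps in place of correspondences. Fix $\e>0$. For each $n=1,\dots,N$ choose continuous maps $f_n\:X_n\to Y_n$ and $g_n\:Y_n\to X_n$ with
$$\max\{\dis f_n,\dis g_n,\codis(f_n,g_n)\}\le 2d^c_{GH}(X_n,Y_n)+\e/N.$$
Define the coordinatewise maps $f\:X\to Y$, $f(x)=(f_1(x_1),\dots,f_N(x_N))$, and $g\:Y\to X$, $g(y)=(g_1(y_1),\dots,g_N(y_N))$.

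\textbf{Continuity.} The $l^p$-metric on a finite product induces the product topology, and each coordinate of $f$ and of $g$ is continuous. Hence $f$ and $g$ are continuous. This is exactly where finiteness of $N$ is used; for countably many factors the product topology and the $l^p$ topology can differ, which is why the statement is restricted to finite products.

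\textbf{The three quantities.} We bound $\dis f$, $\dis g$ and $\codis(f,g)$ by the reverse triangle inequality $\bigl|\|a\|_{l^p}-\|b\|_{l^p}\bigr|\le\|a-b\|_{l^p}$, as in Lemma~\ref{lem:zero}.
\begin{itemize}
\item For $\dis f$, take $a_n=d_{X_n}(x_n,x_n')$ and $b_n=d_{Y_n}(f_n(x_n),f_n(x_n'))$. This gives $\dis f\le\bigl(\sum_n(\dis f_n)^p\bigr)^{1/p}$.
\item $\dis g$ is handled identically.
\item For $\codis(f,g)$, note that $d_X(x,g(y))$ is the $l^p$-norm of the vector $\bigl(d_{X_n}(x_n,g_n(y_n))\bigr)_n$, and $d_Y(f(x),y)$ is the $l^p$-norm of $\bigl(d_{Y_n}(f_n(x_n),y_n)\bigr)_n$. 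The coordinatewise differences are bounded by $\codis(f_n,g_n)$, so $\codis(f,g)\le\bigl(\sum_n\codis(f_n,g_n)^p\bigr)^{1/p}$.
\end{itemize}

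Each of the three quantities is therefore at most
$$\Bigl(\sum_n\bigl(2d^c_{GH}(X_n,Y_n)+\e/N\bigr)^p\Bigr)^{1/p}\le 2\Bigl(\sum_n d^c_{GH}(X_n,Y_n)^p\Bigr)^{1/p}+\e,$$
where the last step is Minkowski's inequality, since the vector with all entries $\e/N$ has $l^p$-norm $N^{1/p}\e/N\le\e$. Taking half the maximum and letting $\e\to0$ yields the claim.

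The only point beyond the proof of Lemma~\ref{lem:EstimationInGeneralCase01} is the codistortion, which mixes $f$ and $g$. It causes no difficulty because both $f$ and $g$ act coordinatewise. A minor caveat: if some $d^c_{GH}(X_n,Y_n)$ is infinite, the right-hand side is infinite and the inequality holds trivially.
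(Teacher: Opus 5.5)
Your proof is correct and is essentially the paper's argument: coordinatewise product maps $f,g$ combined with the reverse triangle inequality for the $l^p$-norm. The paper routes the three estimates through the product correspondence $R=\prod_n R_{f_n,g_n}\supset R_{f,g}$ and Lemma~\ref{lem:zero}, whereas you bound $\dis f$, $\dis g$, $\codis(f,g)$ directly and make continuity and the $\e$-bookkeeping explicit. One aside is inaccurate, though it does not affect the proof: under the standing assumption $\sum_n\diam(X_n)^p<\infty$, the $l^p$-metric on a countable product does induce the product topology, so continuity of coordinatewise maps is no obstruction for countably many factors either.
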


\begin{proof}
We will proceed similarly to the proof of Lemma ~\ref{lem:EstimationInGeneralCase01}.
For any $\e > 0$, consider continuous mappings $f_n \: X_n \to Y_n, \g_n \: Y_n \to X_n$ such that
$$\bigl\| \{\max\{\dis(f_n), \dis(g_n), \codis(f_n, g_n) \}\}_{n=1}^{N} \bigr\|_{l^p} \le
2\bigl\| \{d_{GH}^{c}(X_n, Y_n)\}_{n=1}^{N} \bigr\|_{l^p} + \e.$$
Let's put
$R = R_{f_1, g_1} \times \dots \times R_{f_N, g_N} \in \cR(X, Y)$. Let us define continuous mappings $f \: X \to Y, \ (x_1, \dots, x_N) \mapsto (f_1(x_1), \dots, f_n(x_n))$ and $g \: Y \to X, \ (y_1, \dots, y_n) \mapsto (g_1(y_1), \dots g_n(y_n))$.
Then, by the definition of the Gromov--Hausdorff distance and by the lemma ~\ref{lem:zero} we obtain
\begin{multline*}
2d_{GH}^{c}(X, Y) \le
\max\{ \dis(f), \dis(g), \codis(f, g)\} \le \dis(R) \le \bigl\| \{\dis(R_{f_n,g_n})\}_{n=1}^{N} \bigr\|_{l^p} = \\ = \bigl\| \{ \max\{\dis(f_n), \dis(g_n), \codis(f_n, g_n)\} \}_{n=1}^{N} \bigr\|_{l^p}
\le 2 \bigl\| \{d_{GH}^{c}(X_n, Y_n)\}_{n=1}^{N} \bigr\|_{l^p} + \e.
\end{multline*}
Passing to the limit at $\e \to 0$, we obtain what we need.
\end{proof}

Now let's generalize this to arbitrary sequences.

\begin{lem}\label{lem:ProductContinuity}
Let $\{(X_{n}, d_n)\}_{n=1}^{\infty}$ be a sequence of metric spaces such that $\sum_{n = 1}^{\infty} \bigl(\diam(X_n) \bigr)^{p} < \infty$, where $1 \le p < \infty$.
Then
$$d_{GH}^{c}\biggl((l^{p})\prod_{n=1}^{\infty} X_{n}, \ (l^{p})\prod_{n=1}^{N} X_{n} \biggr) \le
\frac{1}{2}\biggl(\sum_{n=N+1}^{\infty}\bigl(\diam(X_n)\bigr)^p\biggr)^{1/p}.$$
In particular, $$\lim_{N \to \infty} d_{GH}^{c}\biggl((l^{p})\prod_{n=1}^{\infty} X_{n}, \ (l^{p})\prod_{n=1}^{N} X_{n} \biggr) = 0.$$
\end{lem}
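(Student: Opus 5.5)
The plan is to construct explicit continuous maps and bound their distortions and codistortion directly. Set $X = (l^p)\prod_{n=1}^{\infty} X_n$ and $X^N = (l^p)\prod_{n=1}^{N} X_n$, and write $D_N = \bigl(\sum_{n>N}(\diam X_n)^p\bigr)^{1/p}$. Fix points $a_n \in X_n$ for $n > N$. Take $f\: X \to X^N$ to be the projection $x \mapsto (x_1,\dots,x_N)$. Take $g\: X^N \to X$ to be the inclusion $(x_1,\dots,x_N)\mapsto (x_1,\dots,x_N,a_{N+1},a_{N+2},\dots)$. Both maps are $1$-Lipschitz, hence continuous. Indeed, $g$ is an isometric embedding, and $f$ only drops coordinates. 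The assumption $\sum(\diam X_n)^p<\infty$ guarantees that $g$ lands in $X$ with finite distances.

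Next I estimate the three quantities. First, $\dis g = 0$. For $\dis f$, given $x,\tilde x \in X$, the vectors $a=\{d_n(x_n,\tilde x_n)\}_{n\ge1}$ and $b=\{d_n(x_n,\tilde x_n)\}_{n\le N}$ (extended by zeros) differ only in the coordinates $n>N$. The inequality $\bigl|\|a\|_{l^p}-\|b\|_{l^p}\bigr|\le\|a-b\|_{l^p}$, already used in Lemma~\ref{lem:zero}, then gives $0 \le d_X(x,\tilde x)-d_{X^N}(f(x),f(\tilde x)) \le D_N$. For $\codis(f,g)$, take $x\in X$ and $y\in X^N$ and compare $d_X(x,g(y))$ with $d_{X^N}(f(x),y)$. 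The two $l^p$-vectors agree in the first $N$ coordinates, and the remaining coordinates are $d_n(x_n,a_n)\le \diam X_n$. The same norm inequality therefore bounds the difference by $D_N$.

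Hence $\max\{\dis f,\dis g,\codis(f,g)\}\le D_N$. The definition of $d_{GH}^c$ then gives $d_{GH}^c(X,X^N)\le \frac12 D_N$. The limit statement follows because $D_N\to0$, being the tail of a convergent series.

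There is no serious obstacle. The one point to state carefully is the codistortion estimate, since it mixes a point of $X$ with the image under $g$ of a point of $X^N$. This is handled by applying the reverse triangle inequality for the $l^p$-norm coordinatewise, exactly as above.
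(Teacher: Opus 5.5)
Your proof is correct and is essentially the paper's argument unwound. The paper writes $X$ as $(l^p)\tilde X_1\times\tilde X_2$ with $\tilde X_1=(l^p)\prod_{n\le N}X_n$ and compares it with $(l^p)\tilde X_1\times\{pt\}$ via Proposition~\ref{prop:SimpleLinf}, using identity maps on the first factor and the trivial maps to and from a point on the second, which gives exactly your projection and fixed-tail inclusion; you simply check the three bounds directly with the reverse triangle inequality instead of going through Lemma~\ref{lem:zero}, Remark~\ref{rk:iso}, and $d_{GH}^c(Z,\{pt\})=\frac12\diam Z$.
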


\begin{proof}
Let's put $\tilde{X}_1 = \tilde{Y}_1 = (l^p)\prod_{n=1}^{N} X_n$,
$\tilde{X}_2 = (l^p) \prod_{n=N+1}^{\infty} X_n$,
$\tilde{Y}_2 = \{pt\}$.
Since $d_{GH}^{c} (\tilde{X}_1, \tilde{Y}_1)=0$, then by Proposition
~\ref{prop:SimpleLinf} and Remark ~\ref{rk:iso} we have
\begin{multline*}
d_{GH}^{c}\biggl((l^{p})\prod_{n=1}^{\infty} X_{n}, \ (l^{p})\prod_{n=1}^{N} X_{n} \biggr) =
d_{GH}^{c} \bigl((l^p)\tilde{X}_1 \times \tilde{X}_2, (l^p) \tilde{Y}_1 \times \tilde{Y}_2\bigr) \le d_{GH}^{c}(\tilde{X}_2, \tilde{Y}_2) = \\
= d_{GH}^{c}\biggl((l^p)\prod_{n=N+1}^{\infty}X_n, \{pt\} \biggr) =
\frac{1}{2} \diam \biggl((l^p)\prod_{n=N+1}^{\infty}X_n \biggr) = \frac{1}{2}\biggl(\sum_{n=N+1}^{\infty}\bigl(\diam(X_n)\bigr)^p\biggr)^{1/p}.
\end{multline*}
\end{proof}

\begin{thm}
Let $\{(X_{n}, d_n)\}_{n=1}^{\infty}$ and $\{(Y_{n}, \varrho_n)\}_{n=1}^{\infty}$ be sequences of metric spaces, such that $\sum_{n = 1}^{\infty} \bigl(\diam(X_n) \bigr)^{p} < \infty$ and $\sum_{n = 1}^{\infty} \bigl(\diam(Y_n) \bigr)^{p} < \infty$, where $1 \le p < \infty$.
Then
$$d_{GH}^{c}\biggl((l^{p})\prod_{n=1}^{\infty} X_{n}, \ (l^{p})\prod_{n=1}^{\infty} Y_{n} \biggr) \le \Biggl( \sum_{n=1}^{\infty} (d_{GH}^{c} \bigl(X_{n}, Y_{n})\bigr)^{p} \Biggr)^{1/p}.$$
\end{thm}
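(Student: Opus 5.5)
The natural route is a truncation argument: the finite case is already Proposition~\ref{prop:SimpleLinf}, and the tails can be controlled by Lemma~\ref{lem:ProductContinuity}. This assumes $d_{GH}^c$ satisfies the triangle inequality, which holds by~\cite{BogatyTuzhilin}. Write
$$X=(l^p)\prod_{n=1}^{\infty}X_n,\quad Y=(l^p)\prod_{n=1}^{\infty}Y_n,\quad X^N=(l^p)\prod_{n=1}^{N}X_n,\quad Y^N=(l^p)\prod_{n=1}^{N}Y_n.$$
For each $N$ the triangle inequality gives
$$d_{GH}^c(X,Y)\le d_{GH}^c(X,X^N)+d_{GH}^c(X^N,Y^N)+d_{GH}^c(Y^N,Y).$$

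The first and third terms are bounded by Lemma~\ref{lem:ProductContinuity}. It gives
$$d_{GH}^c(X,X^N)\le \tfrac12\Bigl(\sum_{n>N}\diam(X_n)^p\Bigr)^{1/p},$$
and the analogous bound holds for $Y$. Both tails tend to $0$ as $N\to\infty$ because the diameter series converge.

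The middle term is bounded by Proposition~\ref{prop:SimpleLinf}, applied to the finite families $X_1,\dots,X_N$ and $Y_1,\dots,Y_N$. This yields
$$d_{GH}^c(X^N,Y^N)\le\Bigl(\sum_{n=1}^N d_{GH}^c(X_n,Y_n)^p\Bigr)^{1/p}\le\Bigl(\sum_{n=1}^\infty d_{GH}^c(X_n,Y_n)^p\Bigr)^{1/p}.$$
Letting $N\to\infty$ in the triangle inequality then gives the claim.

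The main obstacle is justifying the triangle inequality for $d_{GH}^c$, which the paper has only cited, not proved. If one prefers a self-contained argument, one can instead build the maps directly. Choose continuous $f_n\colon X_n\to Y_n$ and $g_n\colon Y_n\to X_n$ that are near-optimal, with the errors $\e_n$ chosen so that $\|\{\e_n\}\|_{l^p}\le\e$. Then set $f=\prod f_n$ and $g=\prod g_n$.

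In that direct approach, the bound on $\dis(R_{f,g})$ follows verbatim from Lemma~\ref{lem:zero}. What remains is continuity of $f$ on the infinite $l^p$-product, and this needs a short separate argument. Given $\delta>0$, pick $N$ so that the $Y$-tail $\bigl(\sum_{n>N}\diam(Y_n)^p\bigr)^{1/p}$ is below $\delta$. The first $N$ coordinates of $f$ form a continuous map, since each $f_n$ is continuous and the finite product topology is the product topology. Hence $f$ is continuous up to an error of $\delta$ for every $\delta$, which means it is continuous. The same argument applies to $g$.
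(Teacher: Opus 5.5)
Your main argument is the paper's own proof: the triangle inequality through the truncations $X^N$, $Y^N$, with the middle term bounded by Proposition~\ref{prop:SimpleLinf} and the two tails bounded by Lemma~\ref{lem:ProductContinuity}, which tend to $0$ as $N\to\infty$. Your self-contained alternative with $f=\prod_n f_n$, $g=\prod_n g_n$ is also correct, because $R_{f,g}\subset\prod_n R_{f_n,g_n}$ and $f$, $g$ are uniform limits of continuous truncated maps, and it has the small advantage of not needing the triangle inequality for $d_{GH}^{c}$.
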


\begin{proof}
Since this has already been proved for finite products in Lemma ~\ref{lem:ProductContinuity}, by the triangle inequality we obtain for $N \to \infty$, that
\begin{multline*}
d_{GH}^{c}\biggl((l^{p})\prod_{n=1}^{\infty} X_n, \ (l^{p})\prod_{n=1}^{\infty} Y_{n} \biggr) \le
d_{GH}^{c}\biggl((l^{p})\prod_{n=1}^{N} X_n, \ (l^{p})\prod_{n=1}^{N} Y_{n} \biggr) + \\
d_{GH}^{c}\biggl((l^{p})\prod_{n=1}^{\infty} X_{n}, \ (l^{p})\prod_{n=1}^{N} X_{n} \biggr) +
d_{GH}^{c}\biggl((l^{p})\prod_{n=1}^{\infty} Y_{n}, \ (l^{p})\prod_{n=1}^{N} Y_{n} \biggr) \le \\
\
\le
\Biggl( \sum_{n=1}^{\infty} (d_{GH}^{c} \bigl(X_{n}, Y_{n})\bigr)^{p} \Biggr)^{1/p} +
\frac{1}{2} \Biggl( \sum_{n=N+1}^{\infty} \bigl(\diam(X_{n})\bigr)^{p} \Biggr)^{1/p} +
\frac{1}{2} \Biggl( \sum_{n=N+1}^{\infty} \bigl(\diam(Y_{n})\bigr)^{p} \Biggr)^{1/p},
\end{multline*}
where the last two terms tend to zero.
\end{proof}

\begin{rk}
For $l^{\infty}$-products, the situation is somewhat more complicated. Clearly, for finite sequences of metric spaces
$$(X_1, d_{X_1}), \dots, (X_N, d_{X_N}); \ (Y_1, d_{Y_1}), \dots, (Y_N, d_{Y_N})$$ will hold
$$d_{GH}^{c}((l^{\infty})X_1 \times \dots \times X_N, (l^{\infty})Y_1 \times \dots \times Y_N) \le \max\{ d_{GH}^{c} (X_{1}, Y_{1}), \dots d_{GH}^{c} (X_{N}, Y_{N}) \}, $$
However, in the case of an infinite $l^{\infty}$-product, the proof of the previous proposition cannot be carried over verbatim (the problem arises that the last two terms will stop tending to zero).
If we add the additional condition that $\diam(X_n) \to 0$ and $\diam(Y_n) \to 0$, then the estimate
$$d_{GH}^{c} \biggl( \prod_{n=1}^{\infty} X_n, \ \prod_{n=1}^{\infty} Y_n \biggr) \le \sup_{n \in \N} d_{GH}^{c} (X_n, Y_n).$$
\end{rk}

\begin{conj}
Suppose that for arbitrary sequences of metric spaces
$\{(X_n, d_{X_n}\}_{n \in \N}$, \hfill\break $\{(Y_n, d_{Y_n}\}_{n \in \N}$,
$\sup_{n \in \N} \diam(X_n) < \infty$ and $\sup_{n \in \N} \diam(Y_n) < \infty$.
Then is the estimate
$$d_{GH}^{c} \biggl( \prod_{n=1}^{\infty} X_n, \ \prod_{n=1}^{\infty} Y_n \biggr) \le \sup_{n \in \N} d_{GH}^{c} (X_n, Y_n)$$ of $l^{\infty}$-products correct in the general case?
\end{conj}

\section{Distance Estimation for Linear Products}

In this section, we consider a more special case and improve the estimate obtained in Lemma ~\ref{lem:EstimationInGeneralCase01} for it.

\begin{subsection}{Several Lemmas}

\begin{rk}\label{rk:density}
Let $f \: K \times A \to \R$ and $g \: K \times A \to \R$ be given continuous functions, where $K, A$ are arbitrary nonempty metric compact sets.
We consider them as families of continuous mappings $\{f_a\}_{a\in A}$ and $\{g_a\}_{a\in A}$ defined on the space of continuous functions $\cC(K)$, where the $\sup$-norm is defined on $\cC(K)$.
We also consider continuous mappings $F, G \: \cC(K) \to \R$.
Suppose that on an everywhere dense set of parameters $a \in B \subset A$ the equality
$F(f_a) = G(g_a)$ holds. Then this equality will hold for all $a \in A$.
\end{rk}

\begin{proof}
Indeed, the function $f$ is continuous on the compact set $K \times A$, hence it is uniformly continuous.
Since, by the uniform continuity of $f$ and $g$, the mappings $a \mapsto F(f_a)$ and $a \mapsto G(g_a)$ are continuous and, by assumption, coincide on the dense set $B$, the desired equality will hold for all $a \in A$.
\end{proof}

\begin{lem}\label{lem:TechnicalLemma}
Consider a function $\xi \: [0, +\infty) \to \R$ defined as follows: $$\xi(x) = \bigl| (x+1)^{p} - (\alpha x + \beta)^{p} \bigr|,$$ where $\alpha, \beta>0$ are positive parameters and $0 < p \le 1$.
Then, for every $T>0$, $$\sup_{x \in [0,T]} \xi(x) = \max\{ \xi(0), \xi(T) \}.$$
\end{lem}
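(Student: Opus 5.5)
The idea is to show that the signed difference $h(x) = (x+1)^p - (\alpha x+\beta)^p$ is \emph{monotone} or \emph{changes monotonicity at most once in a favourable way} on $[0,\infty)$. Then $|h|$ attains its supremum over $[0,T]$ at an endpoint. For $p=1$ the function $h$ is affine, so $|h|$ is convex and the claim is immediate. So assume $0<p<1$.

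The plan is to analyse $h'(x) = p\bigl((x+1)^{p-1} - \alpha(\alpha x+\beta)^{p-1}\bigr)$. Its sign equals the sign of $(x+1)^{p-1} - \alpha(\alpha x+\beta)^{p-1}$. Since $p-1<0$, the condition $h'(x)>0$ is equivalent to
$$(x+1)^{1-p} < \alpha^{-1}(\alpha x+\beta)^{1-p},$$
that is, to $\alpha^{1/(1-p)}(x+1) < \alpha x+\beta$ after raising to the power $1/(1-p)$. Both sides are affine in $x$, so the set where $h'>0$ is an interval of the form $[0,x_0)$ or $(x_0,\infty)$, or all or nothing. Hence $h$ has at most one critical point on $(0,\infty)$, and that point is either a strict maximum or a strict minimum of $h$.

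Next, I must rule out an interior extremum of $|h|$. Suppose $x_0\in(0,T)$ is an interior critical point and, say, a maximum of $h$ with $h(x_0)>0$; the case of a minimum with $h(x_0)<0$ is symmetric. Then $|h|$ might peak there, so I need extra structure. The key observation is that at a critical point $(x_0+1)^{p-1} = \alpha(\alpha x_0+\beta)^{p-1}$. Multiplying by $(x_0+1)$, respectively by $(\alpha x_0+\beta)$, gives
$$h(x_0) = (x_0+1)^{p-1}\Bigl((x_0+1) - \tfrac{\alpha x_0+\beta}{\alpha}\Bigr) = (x_0+1)^{p-1}\,\frac{\alpha-\beta}{\alpha}.$$
So the sign of $h$ at the critical point is the sign of $\alpha-\beta$, independent of $x_0$.

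Now compare with the type of the critical point. The inequality $h'>0$ holds where $\alpha^{1/(1-p)}(x+1)<\alpha x+\beta$, with slopes $\alpha^{1/(1-p)}$ versus $\alpha$. If $\alpha>1$, then $\alpha^{1/(1-p)}>\alpha$, so $h'>0$ before $x_0$ and $h'<0$ after, making $x_0$ a maximum. The critical point exists in $(0,\infty)$ only if $h'(0)>0$, i.e.\ $\alpha^{1/(1-p)}<\beta$, whence $\beta>\alpha$ and $h(x_0)<0$. A negative maximum means $h<0$ everywhere, so $|h|=-h$ has a minimum at $x_0$ and its supremum is at an endpoint. If $\alpha<1$, the critical point is a minimum and exists only if $\alpha^{1/(1-p)}>\beta$, which forces $\beta<\alpha$. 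Then $h(x_0)>0$, so $h>0$ everywhere and again $|h|$ is minimized at $x_0$. If $\alpha=1$, the sign of $h'$ is constant, so $h$ is monotone and there is nothing to prove.

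The main obstacle is this sign bookkeeping. I would double-check it with the identity for $h(x_0)$, since it is what makes the argument work. In every case $|h|$ on $[0,T]$ is either monotone or unimodal with a minimum in the interior, so $\sup_{[0,T]}\xi=\max\{\xi(0),\xi(T)\}$.
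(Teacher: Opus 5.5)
Your proof is correct. It shares the paper's skeleton: $h$ (the paper's $\eta$) has at most one critical point on $(0,\infty)$, and whenever an interior one exists, $h$ has constant sign on $[0,\infty)$ and $|h|$ has a minimum there. The key steps, however, are carried out differently.

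The paper first normalizes to $\beta\le 1$ via the substitution $t=\alpha x/\beta$, which replaces $(\alpha,\beta)$ by $(1/\alpha,1/\beta)$. It then invokes Remark~\ref{rk:density} to discard degenerate parameters and computes $x_0$ explicitly. It shows $\eta>0$ by locating the only zero $x=(\beta-1)/(1-\alpha)<0$ and using $\eta(0)=1-\beta^p>0$, and it classifies $x_0$ through $\eta''(x_0)$.

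You instead turn the sign of $h'$ into a comparison of the affine functions $\alpha^{1/(1-p)}(x+1)$ and $\alpha x+\beta$. This gives the global sign pattern of $h'$ at once, so maximum versus minimum can be read off from the slopes. You also replace the zero-location argument by the identity $h(x_0)=(x_0+1)^{p-1}(\alpha-\beta)/\alpha$.

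Your route buys three things. First, you handle both regimes $\alpha>1$ and $\alpha<1$ directly; your case $\alpha>1$ is exactly what the paper's normalization transports to its case $\alpha<1$. Second, no density argument is needed. Third, you avoid the second-derivative computation, which is garbled in the paper. The correct value is $\eta''(x_0)=p(p-1)(x_0+1)^{p-2}\bigl(1-\alpha^{-p/(1-p)}\bigr)>0$ for $\alpha<1$, whereas the paper writes the factor $1-\alpha^{1/(1-p)}$ and a sign $<0$ before concluding a minimum.

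One small wording point: if $h$ is monotone on $[0,T]$ but changes sign, then $|h|$ is not monotone. The endpoint bound in that case comes from $\min\{h(0),h(T)\}\le h\le\max\{h(0),h(T)\}$, which is what your opening paragraph actually uses.
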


\begin{proof}
Without loss of generality, we can assume that $\beta \le 1$. Indeed, if $\beta \ge 1$, then we have
$$\xi(x) = \beta^{p} \biggl| \Bigl(\frac{x}{\beta} + \frac{1}{\beta} \Bigr)^{p} - \Bigl(\frac{\alpha x}{\beta}+1 \Bigr)^{p} \biggr|$$ and, making the substitution $t = \alpha x/\beta$, we obtain
$$\xi(t)=\beta^{p} \biggl| (t+1)^{p} - \Bigl( \frac{t}{\alpha} + \frac{1}{\beta} \Bigr)^{p} \biggr|,$$ where now $1/\beta \le 1$.
We will also assume that $p < 1$, since the case $p = 1$ is trivial.

We denote $\eta(x) = (x+1)^{p} - (\alpha x + \beta)^{p}$, then $\xi(x) = |\eta(x)|$. Note that the derivative
$$\eta'(x_0) = p(x_{0}+1)^{p - 1}-p\alpha(\alpha x_{0}+\beta)^{p - 1} = 0 \iff
x_0 = \frac{\beta - \alpha^{\frac{1}{1-p}}}{\alpha^{\frac{1}{1-p}} - \alpha}.$$
By remark ~\ref{rk:density}, it suffices to prove the lemma on an everywhere dense set of parameters
$\bigl{(\alpha, \beta, p, T)}$, so we can assume that all denominators do not vanish during the calculations.
Now we will consider two cases:

\begin{enumerate}
\item Let the point $x_{0}$ either not exist or $x_{0} \le 0$.
Since we assumed that $\beta \le 1$, this case corresponds to either $\alpha \ge 1$ or $\beta \ge \alpha^{\frac{1}{1-p}}$.
\item Now let the point $x_{0}$ exist and $x_{0} > 0$. Then it turns out that $$0 < \alpha < 1, \ 0 < \beta < \alpha^{\frac{1}{1-p}}.$$
Consequently, $$\left( \frac{\beta - \alpha}{ \alpha^{\frac{1}{1-p}} - \alpha } \right)^{p-2} > 0.$$

Note that the function $\eta(x)$ does not vanish on $[0, +\infty)$. Indeed,
$$\eta(x)=0 \iff x=\frac{\beta-1}{1-\alpha} < 0.$$

Since $\eta(0) = 1 - \beta^{p} > 0$, then
for all $x \in [0, +\infty)$ we have $\xi(x)= \eta(x)>0.$

To complete the proof, it suffices to verify that $\eta''(x_{0})>0$.
So,
$$\eta''(x) = p(p-1)(x+1)^{p-2} - \alpha^2 p(p-1)(\alpha x + \beta)^{p-2}$$
And
$$\eta''(x_0) = p(p-1) \left( \frac{\beta - \alpha}{ \alpha^{\frac{1}{1-p}} - \alpha } \right)^{p-2} \left( 1 - \alpha^{\frac{1}{1-p}} \right) < 0,$$
that is, there will be a local minimum at point $x_{0}$.
\end{enumerate}
\end{proof}

\begin{cor}\label{cor:TechnicalCor}
Consider the function $\xi \: [0, +\infty) \to \R$ defined as follows: $$\xi(x) = \bigl| (x+1)^{1/p} - (\alpha x + \beta)^{1/p} \bigr|,$$ where $\alpha, \beta>0$ are positive parameters and $1 \le p < \infty$.
Then, for every $T>0$, $$\sup_{x \in [0,T]} \xi(x) = \max\{ \xi(0), \xi(T) \}.$$
\end{cor}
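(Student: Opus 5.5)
The plan is to reduce Corollary~\ref{cor:TechnicalCor} directly to Lemma~\ref{lem:TechnicalLemma} by setting $q = 1/p$. Since $1 \le p < \infty$, we have $0 < q \le 1$. The function in the corollary is then
$$\xi(x) = \bigl|(x+1)^{q} - (\alpha x+\beta)^{q}\bigr|, \qquad x \in [0,+\infty),$$
with positive parameters $\alpha, \beta$ and exponent $q \in (0,1]$. This is exactly the setting of Lemma~\ref{lem:TechnicalLemma}, with $p$ there replaced by $q$.

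Applying the lemma for each $T>0$ gives $\sup_{x\in[0,T]}\xi(x)=\max\{\xi(0),\xi(T)\}$, which is the claim. The case $p=1$, i.e. $q=1$, needs no separate treatment: it is allowed in the lemma, and in any case $\xi(x)=|(1-\alpha)x+1-\beta|$ is then the absolute value of an affine function, hence convex, so its maximum on $[0,T]$ is attained at an endpoint.

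The only point needing care is that the lemma is stated for an exponent in $(0,1]$ and the substitution $q=1/p$ lands exactly in that range. No other hypothesis of the lemma is used.

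If one prefers not to rely on the case analysis inside the lemma's proof, there is an independent route. Set $u=x+1$ and $v=\alpha x+\beta$, both affine and positive on $[0,T]$. The function $\eta=u^q-v^q$ has derivative $\eta'(x)=q\bigl(u^{q-1}-\alpha v^{q-1}\bigr)$. Because $q-1\le 0$ and $u,v$ are affine, the equation $\eta'=0$ reduces to $u/v=\alpha^{1/(q-1)}$, which is linear in $x$, so $\eta$ has at most one critical point. Using concavity of $t\mapsto t^q$, one would check that any interior critical point cannot be a local maximum of $|\eta|$; this is the step I expect to be the main obstacle. Since the lemma already provides this conclusion, the direct substitution suffices.
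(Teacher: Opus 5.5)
Your proposal is correct and is exactly the paper's argument: the corollary follows from Lemma~\ref{lem:TechnicalLemma} by substituting the exponent $q=1/p\in(0,1]$ for $p$. The direct route you sketch is not needed, but it does close for $q<1$: at the unique critical point $x_0$ one has $\alpha x_0+\beta=\alpha^{1/(1-q)}(x_0+1)$, so $\eta(x_0)$ and $\eta''(x_0)$ have the same sign, which makes $x_0$ a local minimum of $|\eta|$.
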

\begin{proof}
This follows immediately from ~\ref{lem:TechnicalLemma} by replacing $p$ with $1/p$.
\end{proof}

\begin{lem}\label{lem:BestTechnical}
Consider the function $\xi \: [0, T_1] \times \dots\times [0, T_N] \to \R$, defined as follows:
$$\xi(\k) = \xi(\k_1, \dots, \k_N)= \Biggl|
\biggl( A +\sum_{n=1}^{N} |a_{n} \k_{n}|^{p} \biggr)^{1/p} -
\biggl( B +\sum_{n=1}^{N} |b_{n} \k_{n}|^{p} \biggr)^{1/p}
\Biggr|,$$ where $a_1, \dots, a_N \ge 0$, $b_1, \dots, b_N \ge 0$ is an arbitrary set, $A, B \ge 0$, and $1 \le p < \infty$.
Then
\begin{multline*}
\sup_{\k \in [0, T_1] \times \dots \times [0, T_N]} \xi(\k) = \max_{\k \in \{0, T_1\} \times \dots \times \{0, T_N\}} \xi(\k) = \\
= \max_{S \subset \{1, \dots, N\}}
\Biggl| \biggl(A + \sum_{n \in S}|T_n a_n|^{p} \biggr)^{1/p} - \biggl(B + \sum_{n \in S}|T_n b_n|^{p} \biggr)^{1/p} \Biggr|.
\end{multline*}
\end{lem}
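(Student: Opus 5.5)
The plan is to reduce to a one-variable statement, apply Corollary~\ref{cor:TechnicalCor} in each coordinate, and induct. The function $\xi$ is continuous on the compact box, so the supremum is attained. It therefore suffices to show the following: for any point $\k$ of the box and any index $n$, replacing $\k_n$ by $0$ or by $T_n$ (keeping the other coordinates fixed) does not decrease $\xi$. Applying this successively in coordinates $1, \dots, N$ moves a maximizer to a vertex of $\{0,T_1\}\times\dots\times\{0,T_N\}$. At a vertex, the coordinates with $\k_n = T_n$ form a set $S$, and the others contribute $0$. This gives the last expression with $\max$ over $S \subset \{1,\dots,N\}$.

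For the coordinate step, fix all coordinates except $\k_n$ and set
\[
A' = A + \sum_{m\ne n}|a_m\k_m|^p, \qquad B' = B+\sum_{m\ne n}|b_m\k_m|^p .
\]
Substitute $x = \k_n^p$, which maps $[0,T_n]$ monotonically onto $[0,T_n^p]$. The function becomes
\[
\bigl|(A' + a_n^p x)^{1/p} - (B' + b_n^p x)^{1/p}\bigr|.
\]
If $a_n>0$ and $A'>0$, factor out $A'^{1/p}$ and rescale $x$ by $a_n^p/A'$. This yields the form $(A')^{1/p}\bigl|(t+1)^{1/p}-(\alpha t+\beta)^{1/p}\bigr|$ with $\alpha = b_n^p/a_n^p$ and $\beta = B'/A'$. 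Corollary~\ref{cor:TechnicalCor} then says the supremum over the interval is attained at an endpoint.

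The main obstacle is the degenerate cases, since the corollary requires $\alpha,\beta>0$. These are $A'=0$, $B'=0$, $a_n=0$ or $b_n=0$. I would handle them by continuity, in the spirit of Remark~\ref{rk:density}. For fixed $p$, the quantity $\max_{[0,T]}\bigl|(t+1)^{1/p}-(\alpha t+\beta)^{1/p}\bigr| - \max\{\xi(0),\xi(T)\}$ depends continuously on $(\alpha,\beta)\in[0,\infty)^2$ by uniform continuity on compacts. It is zero for $\alpha,\beta>0$, so it is zero on the closure as well. The same continuity argument applied to $(A',B',a_n,b_n)$ covers $A'=0$ or $a_n=0$. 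Alternatively, one can swap the roles of the two terms, or note that if both $A'$ and $a_n$ vanish the expression is $|0-(B'+b_n^px)^{1/p}|$, which is monotone in $x$.

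So each one-variable slice attains its maximum at an endpoint, and the induction over coordinates finishes the proof.
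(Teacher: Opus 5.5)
Your proposal is correct and follows essentially the same route as the paper. The one-variable case is reduced to Corollary~\ref{cor:TechnicalCor} by the same substitution ($\alpha=b^p/a^p$, $\beta=B/A$), and degenerate parameters are handled by continuity in the spirit of Remark~\ref{rk:density}; your coordinate-by-coordinate replacement of a point by a vertex is just an unrolled form of the paper's induction on $N$, which interchanges $\sup_{\k_N}$ with the maximum over the vertices of the first $N-1$ coordinates.
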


\begin{proof}
It suffices to prove only the first equality, since the second equality is obvious.
As in Lemma ~\ref{lem:TechnicalLemma}, according to Remark ~\ref{rk:density}, we will assume without loss of generality that
$a_1, \dots, a_N > 0$, $b_1, \dots, b_N > 0$, $A, B>0$.

Let $N = 1$, then
$$\xi(\k)=\bigl|(a^p \k^p+A)^{1/p} - (b^p \k^p+B)^{1/p} \bigr|=A^{1/p} \biggl| \Bigl(\frac{a^{p} \k^p}{A} + 1\Bigr)^{1/p} -
\Bigl(\frac{b^{p} \k^p}{A} + \frac{B}{A}\Bigr)^{1/p} \Bigr|,$$ and, making the replacement
$$x = \frac{a^p \k^p}{A}, \ \alpha=\frac{b^p}{a^p}, \ \beta=\frac{B}{A},$$ we reduce this case to the corollary ~\ref{cor:TechnicalCor}.

Now let $N > 1$.
We prove by induction on $N$, assuming that for $N - 1$ it is true.
Thus, applying for each fixed
$\k_{N}=\k_N^* \in [0, \ T_N]$
the induction hypothesis for the function
$\tilde{\xi}(\k_1, \dots, \k_{N-1}) := \xi(\k_1, \dots , \k_{N-1}, \k_N^*)$, we obtain that
$$\sup_{\k \in [0, \ T_1] \times \dots \times [0, \ T_{N-1}]} \xi(\k_1, \dots, \k_{N-1},\k_{N}^*) = \max_{(\k_1, \dots \k_{N-1}) \in \{0, \ T_1\} \times \dots \times\{0, \ T_{N-1}\}} \xi(\k_1, \dots, \k_{N-1},\k_{N}^*).$$
Applying the indoction hypothesis for $N = 1$ and for the function
$\xi^*(\k_{N})=\xi(\k_1^*, \dots \k_{N-1}^*, \k_N)$ for fixed $\k_1^*, \dots \k_{N-1}^*$, we obtain that
$$\sup_{\k_{N}\in[0, T_N]} \xi^*(\k_1^*, \dots, \k_{N-1}^*, \k_N) = \max\{\xi(\k_1^*, \dots, \k_{N-1}^*, 0), \ \xi(\k_1^*, \dots, \k_{N-1}^*, T_{N})\}.$$
Therefore,
\begin{multline*}
\sup_{\k \in [0,  T_1] \times \dots \times [0,  T_N]} \xi(\k_1, \dots,\k_{N}) =
\sup_{\k_{N}^* \in [0,  T_{N}]} \
\sup_{\k \in [0,  T_1] \times \dots \times [0,  T_{N-1}]}
\xi(\k_1, \dots,\k_{N-1}, \k_N^*) = \\
\
= \sup_{\k_{N}^* \in [0, T_{N}]} \
\max_{\k \in \{ 0, T_1 \} \times \dots \times \{0, T_{N-1} \} }
\xi(\k_1, \dots,\k_{N-1}, \k_N^*) = \\
\
= \max_{\k^* \in \{ 0, T_1 \} \times \dots \times \{0, T_{N-1} \} } \
\sup_{\k_{N} \in [0, T_{N}]}
\xi(\k_1^*, \dots,\k_{N-1}^*, \k_N) = \\
\
= \max_{\k^* \in \{ 0, T_1 \} \times \dots \times \{0, T_{N-1} \} } \max\{ \xi(\k_1^*, \dots, \k_{N-1}^*, 0), \ \xi(\k_1^*, \dots, \k_{N-1}^*, T_N) \} = \\
\
= \max_{\k \in \{0, T_1\} \times \dots \times \{0, T_N\}} \xi(\k).
\end{multline*}
\end{proof}

\begin{prop}\label{prop:TheMainTechnical}
\begin{enumerate}
\item Let $a=\{a_n\}_{n \in \N}\in l^p$, $b=\{a_n\}_{n \in \N}\in l^p$ --- two sequences, $1 \le p < \infty$.
We define a function $\xi_p \: [0, 1]^{\N} \to \R$, defined as follows:
$$\xi_p (\k)=\Biggl| \biggl(\sum_{n=1}^{\infty} |a_n \k_n|^p \biggl)^{1/p} -
\biggl(\sum_{n=1}^{\infty} |b_n \k_n|^p \biggl)^{1/p}
\Biggr|. $$
Then $$\sup_{\k \in [0, 1]^{\N}} \xi_p (\k)=\sup_{\k \in \{0, 1\}^{\N}} \xi_p (\k)=\sup_{S \subset \N} \ \Biggl|
\biggl(\sum_{n \in S} |a_n|^p \biggr)^{1/p}- \biggl(\sum_{n \in S} |b_n|^p \biggr)^{1/p} \Biggr|.$$

\item Let $a = \{a_{\alpha}\}_{\alpha \in A}$, $b = \{b_{\alpha}\}_{\alpha \in A}$ be a family of numbers such that $\sup_{\alpha \in A} a_{\alpha} < \infty$ and $\sup_{\alpha \in A} b_{\alpha} < \infty$.
We define a function $\xi_{\infty} :\ [0, 1]^A \to \R$ defined as follows:
$$\xi_{\infty}(\k) = \bigg| \sup_{\alpha \in A}|a_{\alpha}\k_{\alpha}| - \sup_{\alpha \in A}|b_{\alpha}\k_{\alpha}|\bigg|.$$
Then
$$\sup_{\k \in [0, 1]^A} \xi_{\infty}(\k)=\sup_{\k \in \{0, 1\}^A} \xi_{\infty}(\k)= \sup_{S \subset A}\bigg| \sup_{\alpha \in S}|a_{\alpha}| - \sup_{\alpha \in S}|b_{\alpha}|\bigg|.$$
\end{enumerate}
\end{prop}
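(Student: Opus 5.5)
For part 1, I would reduce to the finite-dimensional Lemma~\ref{lem:BestTechnical} by truncation. The inequalities $\sup_{\{0,1\}^{\N}}\xi_p\le\sup_{[0,1]^{\N}}\xi_p$ are trivial. The last equality is immediate because $\k\in\{0,1\}^{\N}$ corresponds to the indicator of $S=\{n:\k_n=1\}$. So it remains to show that for every $\k\in[0,1]^{\N}$,
$$\xi_p(\k)\le \sup_{S\subset\N}\Bigl|\|a|_S\|_{l^p}-\|b|_S\|_{l^p}\Bigr|.$$

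Fix $\k$ and $N$, and let $\k^{(N)}$ agree with $\k$ on $\{1,\dots,N\}$ and vanish beyond. Then $\xi_p(\k^{(N)})$ is exactly the function of Lemma~\ref{lem:BestTechnical} with $A=B=0$, $T_n=1$, and coefficients $|a_n|,|b_n|$. That lemma gives
$$\xi_p(\k^{(N)})\le \max_{S\subset\{1,\dots,N\}}\Bigl|\|a|_S\|_{l^p}-\|b|_S\|_{l^p}\Bigr|,$$
which is bounded by the supremum over all $S\subset\N$.

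To pass to the limit, note that $\bigl|\|a\k^{(N)}\|_{l^p}-\|a\k\|_{l^p}\bigr|\le \|a|_{n>N}\|_{l^p}\to0$ because $a\in l^p$, and likewise for $b$. Hence $\xi_p(\k^{(N)})\to\xi_p(\k)$, and the inequality survives. The only care needed is to apply the lemma with $A=B=0$, which it explicitly allows, and to use the $l^p$ tails for convergence. I also read the statement's ``$b=\{a_n\}$'' as a typo for $b=\{b_n\}$.

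For part 2, I would argue directly. Fix $\k\in[0,1]^A$ and set $u=\sup_\alpha|a_\alpha|\k_\alpha$, $v=\sup_\alpha|b_\alpha|\k_\alpha$, assuming without loss of generality $u>v$. The idea is to replace $\k$ by the indicator of a threshold set $S_t=\{\alpha:\k_\alpha\ge t\}$ for a suitable $t$. On $S_t$ we have $|b_\alpha|\le v/t$, so $\sup_{S_t}|b_\alpha|\le v/t$.

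The key step is choosing $t$. Pick $\alpha_0$ with $|a_{\alpha_0}|\k_{\alpha_0}>u-\e$ and set $t=\k_{\alpha_0}$. Then $\sup_{S_t}|a|\ge|a_{\alpha_0}|>(u-\e)/t$, so the difference on $S_t$ exceeds $(u-\e-v)/t\ge u-v-\e$ since $t\le1$. The degenerate case $t=0$ cannot occur when $u-\e>0$. If $u\le \e$, the bound is trivial.

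This scaling argument is the main subtlety; everything else is bookkeeping. The symmetric case $v>u$ is identical, and the middle and last equalities again follow by identifying $\{0,1\}^A$ with subsets $S\subset A$, using the convention that the supremum over the empty set is $0$.
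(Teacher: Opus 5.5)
Your part 1 is the paper's argument: truncate $\k$, apply Lemma~\ref{lem:BestTechnical} to the first $N$ coordinates with $A=B=0$, and let $N\to\infty$ using the $l^p$ tails. You spell out the convergence $\xi_p(\k^{(N)})\to\xi_p(\k)$ more explicitly than the paper does.

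Part 2 is where you genuinely differ, and your route is the sounder one. The paper picks near-maximizers $\alpha$ of $|a_\gamma|\k_\gamma$ and $\beta$ of $|b_\gamma|\k_\gamma$. It replaces $\k$ by the indicator $\k'$ of the singleton $\{\alpha\}$ and splits into the cases $\alpha=\beta$ and $\alpha\neq\beta$. In the second case its chain of inequalities uses
\[
\bigl||a_\alpha|\,\k'_\alpha-|b_\beta|\,\k'_\beta\bigr| = |a_\alpha| \le \xi_\infty(\k') = \bigl||a_\alpha|-|b_\alpha|\bigr|,
\]
which fails whenever $0<|b_\alpha|<2|a_\alpha|$. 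So that step is not justified as written.

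Your scaling observation repairs exactly this: $|b_\gamma|\le v/\k_\gamma$ whenever $\k_\gamma>0$. Your argument uses nothing about $S_t$ beyond $\alpha_0\in S_t$ and $\sup_{S_t}|b|\le v/t$. Hence it works verbatim for the singleton $S=\{\alpha_0\}$, giving
\[
|a_{\alpha_0}|-|b_{\alpha_0}|\ge \frac{u-\e-v}{\k_{\alpha_0}}.
\]
This shows the paper's choice of test vector was fine and only its justification was off. Your version also needs no case split on $\alpha=\beta$ versus $\alpha\neq\beta$.

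One small bookkeeping fix. The inequality $(u-\e-v)/t\ge u-v-\e$ needs $u-v-\e\ge 0$. So the trivial case to set aside is $u-v\le\e$, not merely $u\le\e$; there $\xi_\infty(\k)\le\e$ and the right-hand side is $\ge 0$. Once $u-v>\e$, you get $u-\e>v\ge 0$, so $t=\k_{\alpha_0}>0$ automatically and the degenerate case disappears.
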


\begin{proof}
\begin{enumerate}
\item Denote $$\xi_p^{(N)} (\k)=\Biggl| \biggl(\sum_{n=1}^{N} |a_n \k_n|^p \biggl)^{1/p} -
\biggl(\sum_{n=1}^{N} |b_n \k_n|^p \biggl)^{1/p}
\Biggr|.$$ Let $\k = \{\k_{n}\}_{n \in \N}$ be a fixed element. Then, for any $\e > 0$, there exists $N \in \N$ such that $|\xi_p(\k)-\xi_p^{(N)}(\k)|<\e$.
By Lemma ~\ref{lem:BestTechnical}, we obtain that $$\xi_p(\k) \le \e + \xi_p^{(N)}(\k) \le \e + \sup_{\k \in [0, 1]^N} \xi_p^{(N)}(\k) = \e + \sup_{\k \in \{0, 1\}^N} \xi_p^{(N)}(\k) \le \e + \sup_{\k \in \{0, 1\}^{\N}} \xi_p (\k).$$
Letting $\e \to 0$, we obtain the desired result.

\item Let $\k = \{\k_{\a}^*\}_{\a \in A}$ be a fixed element. Then, for any given $\e >0$, there exist $\a, \b$ such that
$$\xi_{\infty}(\k^*) \le \bigl| |a_{\a}||\k_{a}^*|-|b_{\b}||\k_{\b}^*| \bigr| + \e.$$ Let us consider two cases.
\begin{enumerate}
\item Let $\a \ne \b$. Without loss of generality, we assume that $|a_{\a}||\k_{\a}^*|\ge|b_{\b}||\k_{\b}^*|$. Then consider $\k'=\{\k'_{\g}\}_{\g \in A}\in\{0,1\}^A$ such that $\k'_{\g}=1$ for $\g=\a$ and $\k'_{\g}=0$ for $\g \ne \a$. Then
\begin{multline*}
\xi_{\infty}(\k^*) \le \bigl| |a_{\a}||\k_{a}^*|-|b_{\b}||\k_{\b}^*| \bigr| + \e \le \\ \le
\bigl| |a_{\a}||\k'_{a}|-|b_{\b}||\k'_{\b}| \bigr| + \e \le \xi_{\infty}(\k') + \e \le \sup_{\k \in \{0.1\}^A } \xi_{\infty}(\k) + \e. \end{multline*}

\item Let $\a = \b$. Consider $\k'=\{\k'_{\g}\}_{\g \in A} \in \{0,1\}^A$ such that $\k'_{\g}=1$ for $\g=\a$ and $\k'_{\g}=0$ for $\g \ne \a$. Then
$$ \xi_{\infty}(\k^*) \le |\k_{\a}^*|\bigl| |a_{\a}|-|b_{\a}|\bigr|+\e \le |\k'_{\a}|\bigl| |a_{\a}|-|b_{\a}|\bigr|+\e
\le \xi_{\infty}(\k')+\e\le\sup_{\k \in \{0,1\}^A } \xi_{\infty}(\k) + \e.$$
\end{enumerate}
Passing to the limit $\e \to 0$, we obtain $$\xi_{\infty}(\k^*) \le\sup_{\k \in \{0,1\}^A } \xi_{\infty}(\k).$$
\end{enumerate}
\end{proof}

\end{subsection}

\begin{subsection}{Main Theorems}

\begin{notation}
Recall that for an arbitrary metric space $(X, d)$ and for any $\lambda > 0$, we denote by $\lambda (X, d)$ the metric space $(X, \lambda d)$, where the metric is defined by the equality $(\lambda d)(x,y) := \lambda d(x, y)$. For brevity, we will henceforth denote $X$ instead of $(X, d)$ and $\lambda X$ instead of $\lambda (X, d)$.
\end{notation}

\begin{notation}
For any $1 \le p \le\infty$, we set
$$l^{p}_{+}= \bigl\{ \{ x_n \}_{n \in \N} \in l^p \big| \ (\forall n\in\N) \ (x_n \ge 0)\bigr\}.$$
\end{notation}

\begin{dfn}
Let ${(X_n, d_{X_n})}_{n \in \N}$ be a sequence of metric spaces of diameter $\diam(X_n) = 1$.
Let $1 \le p < \infty$.
For any $a = \{a_n\}_{n \in \N} \in l^{p}_{+}$ we define \emph{linear product}
$(X, d_X) = (l^p) \prod_{n=1}^{\infty} a_n (X_n, d_{X_n})$.
\end{dfn}

\begin{prop}\label{prop:DisDiagonal}
\begin{enumerate}
\item \label{prop:DisDiagonal:1} Let $1\le p < \infty$. Consider $\{W_n\}_{n \in \N}$ --- a fixed sequence of metric spaces, let $\diam(W_n)=1$ for all $n\in \N$. Let $a,b \in l^p_{+}$.

Denote $$X = (l^p)\prod_{n=1}^{\infty}a_n W_n, \ Y=(l^p) \prod_{n=1}^{\infty}b_n W_n.$$
Consider the mapping $f \: X \to Y$ defined by the rule $f \: \{w_n\}_{n \in \N} \mapsto \{w_n\}_{n \in \N}$.

Then $$\dis(f)=\sup_{S \subset \N} \ \Biggl| \biggl(\sum_{n \in S} |a_n|^p \biggr)^{1/p}- \biggl(\sum_{n \in S} |b_n|^p \biggr)^{1/p} \Biggr|.$$

\item \label{prop:DisDiagonal:2} Consider $\{W_{\a}\}_{\a \in A}$ --- any family
of metric spaces for which $\diam(W_{\a})=1$ for all $\a \in A$.
Let $a = \{a_{\a}\}_{\a \in A}, a_{\a}\ge0$ and $b = \{b_{\a}\}_{\a \in A}, b_{\a}\ge0$ be such that $\sup_{\a \in A}a_{\a} < \infty$, $\sup_{\a \in A}b_{\a} < \infty$.

Let us denote $$X = (l^{\infty})\prod_{\b \in A} a_{\b}W_{\b}, \ Y = (l^{\infty})\prod_{\b \in A} b_{\b}W_{\b}.$$

Consider the mapping $f \: X \to Y$ given by the rule
$f \: \{w_{\a}\}_{\a \in A} \mapsto \{w_{\a}\}_{\a \in A}$.

Then $$dis(f)= \sup_{S \subset A}\big| \sup_{\b \in S}a_{\b} - \sup_{\b \in S}b_{\b}\big|.$$
\end{enumerate}
\end{prop}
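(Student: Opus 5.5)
The plan is to reduce both items to Proposition~\ref{prop:TheMainTechnical}, proving matching upper and lower bounds for $\dis(f)$.

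\textbf{Upper bound (item 1).} For $w,\tilde w\in X$ set $\k_n = d_{W_n}(w_n,\tilde w_n)\in[0,1]$, which is allowed because $\diam(W_n)=1$. Then
$$d_X(w,\tilde w)=\Bigl(\sum_n |a_n\k_n|^p\Bigr)^{1/p},\qquad d_Y(f(w),f(\tilde w))=\Bigl(\sum_n |b_n\k_n|^p\Bigr)^{1/p},$$
so $|d_X(w,\tilde w)-d_Y(f(w),f(\tilde w))|=\xi_p(\k)$. Consequently
$$\dis(f)\le \sup_{\k\in[0,1]^{\N}}\xi_p(\k),$$
and by Proposition~\ref{prop:TheMainTechnical} this supremum equals the supremum over $S\subset\N$ appearing in the statement.

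\textbf{Lower bound (item 1).} Fix $S\subset\N$ and $\e>0$. We need points $w,\tilde w$ with $\k_n$ close to $1$ for $n\in S$ and $\k_n=0$ for $n\notin S$. Since $\diam(W_n)=1$ is only a supremum, for each $n\in S$ choose $u_n,\tilde u_n\in W_n$ with $d_{W_n}(u_n,\tilde u_n)>1-\e$. Outside $S$, set $w_n=\tilde w_n$ to be an arbitrary fixed point of $W_n$.

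One must check that $w,\tilde w$ lie in the product. Here this is automatic: all sequences are admissible because $a\in l^p$ and each factor $a_nW_n$ has diameter $a_n$, so every coordinatewise distance sequence is dominated by $(a_n)\in l^p$.

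This gives $\k\in[1-\e,1]^S\times\{0\}^{\N\setminus S}$. Write $\chi_S$ for the vector with $\k_n=1$ on $S$ and $0$ elsewhere. Then $\xi_p(\k)\to\xi_p(\chi_S)$ as $\e\to0$, because both $l^p$-norms are continuous under the perturbation: the coordinates change by at most $\e a_n$ and $\e b_n$, so each norm moves by at most $\e\|a\|_p$ and $\e\|b\|_p$ respectively. Hence $\dis(f)\ge\xi_p(\chi_S)$ for every $S$, which is the reverse inequality.

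\textbf{Item 2.} The argument is identical with sup-norms. Now $|d_X-d_Y|=\xi_\infty(\k)$, and the upper bound follows from part 2 of Proposition~\ref{prop:TheMainTechnical}. For the lower bound, the same choice of near-diametral pairs on $S$ gives
$$\bigl|\sup_S a_\b\k_\b-\sup_S b_\b\k_\b\bigr|,$$
which differs from $\bigl|\sup_S a_\b-\sup_S b_\b\bigr|$ by at most $\e(\sup a+\sup b)$.

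\textbf{Main obstacle.} The only delicate point is the lower bound: the diameters need not be attained, and $S$ may be infinite, so the choice of points must be approximate and uniform in $n$. The uniform bound $1-\e$ together with the $l^p$ (respectively sup) norm estimate handles both issues.
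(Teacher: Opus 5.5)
Your proof is correct and follows the paper's argument. The upper bound comes from setting $\k_n=d_{W_n}(w_n,\tilde w_n)\in[0,1]$ and applying Proposition~\ref{prop:TheMainTechnical}. The lower bound comes from choosing near-diametral pairs so that $\k$ approximates the indicator of $S$.

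The only difference is cosmetic: the paper first truncates $S$ to finitely many indices, whereas you treat an infinite $S$ directly via the uniform estimate $\bigl\|a\chi_S-a\k\bigr\|_{l^p}\le\e\|a\|_{l^p}$, which is slightly cleaner.
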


\begin{proof}
We will prove only point ~\ref{prop:DisDiagonal:1}; point ~\ref{prop:DisDiagonal:2} is obtained similarly.
Using the definition of mapping distortion and proposition ~\ref{prop:TheMainTechnical} and denoting $\varkappa_n = d_{W_n}(w_n, \tilde{w}_n)$, we obtain
\begin{multline*}
\dis(f) = \sup_{x, \tilde{x} \in X} \bigl| d_{X}(x, \tilde{x})-d_{Y}(f(x),f(\tilde{x})) \bigr| =\\
\
= \sup_{\substack{w = \{w_n\}_{n = 1}^{\infty} \in X \\ \tilde{w} = \{\tilde{w}_n\}_{n = 1}^{\infty} \in X}} \Biggl| \biggl(\sum_{n=1}^{\infty} \bigl(a_n \ d_{W_n}(w_n, \tilde{w}_n) \bigr)^p \biggl)^{1/p}-
\biggl(\sum_{n=1}^{\infty} \bigl(b_n \ d_{W_n}(w_n, \tilde{w}_n) \bigr)^p \biggl)^{1/p} \Biggr| \le\\
\
\le \sup_{\k \in [0,1]^{\N}} \ \Biggl| \biggl(\sum_{n=1}^{\infty} |a_n \k_n|^p \biggl)^{1/p} -
\biggl(\sum_{n=1}^{\infty} |b_n \k_n|^p \biggl)^{1/p} \Biggr| = \sup_{S \subset \N} \ \Biggl| \biggl(\sum_{n \in S} |a_n|^p \biggr)^{1/p}- \biggl(\sum_{n \in S} |b_n|^p \biggr)^{1/p} \Biggr|.
\end{multline*}
Note that in the previous chain of inequalities, the inequality actually becomes an equality. Indeed, consider some fixed subset $S^* \subset \N$ and fix an arbitrary $\e >0$. Consider a finite set of numbers $0 \le D_1, \dots, D_N < 1$ such that
$\bigl| \bigl(\sum_{n \in S^*} |a_n|^p \bigr)^{1/p} - \|\{a_n D_n\}_{n=1}^{N}\|_{l^p} \bigr| < \e / 2$ and
$\bigl| \bigl(\sum_{n \in S^*} |b_n|^p \bigr)^{1/p} - \|\{b_n D_n\}_{n=1}^{N}\|_{l^p} \bigr| < \e / 2$.
Then
$$\Biggl| \biggl(\sum_{n \in S^*} |a_n|^p \biggr)^{1/p}- \biggl(\sum_{n \in S^*} |b_n|^p \biggr)^{1/p} \Biggr| \le
\Biggl| \biggl(\sum_{n = 1}^{N} |a_n D_n|^p \biggr)^{1/p}- \biggl(\sum_{n = 1}^{N} |b_n D_n|^p \biggr)^{1/p} \Biggr| + \e.$$
Note that we can choose $D_1, \dots D_N$ such that for every $n$, for some
$w_n^*, \tilde{w}_n^* \in W_n$, we have
$D_n = d_{W_n}(w_n^*, \tilde{w}_n^*)$.
Then, by construction, we obtain
\begin{multline*}
\Biggl| \biggl(\sum_{n \in S^*} |a_n|^p \biggr)^{1/p}- \biggl(\sum_{n \in S^*} |b_n|^p \biggr)^{1/p} \Biggr| \le\\
\
\Biggl| \biggl(\sum_{n=1}^{\infty} \bigl(a_n \ d_{W_n}(w_n^*, \tilde{w}_n^*) \bigr)^p \biggl)^{1/p}-
\biggl(\sum_{n=1}^{\infty} \bigl(b_n \ d_{W_n}(w_n^*, \tilde{w}_n^*) \bigr)^p \biggl)^{1/p} \Biggr| + \e,
\end{multline*}
which justifies the desired equality and proves the proposition.
\end{proof}

This proposition implies the following theorem.

\begin{thm}\label{thm:MainTheorem}
\begin{enumerate}
\item \label{thm:MainTheorem:1} Let $1\le p < \infty$. Consider $\{W_n\}_{n \in \N}$ --- a fixed sequence of metric spaces, let $\diam(W_n)=1$ for all $n\in \N$. Let $a,b \in l^p_{+}$.

Let us denote $$X = (l^p)\prod_{n=1}^{\infty}a_n W_n, \ Y=(l^p) \prod_{n=1}^{\infty}b_n W_n.$$

Then $$2d_{GH}(X,Y) \le \sup_{S \subset \N} \ \Biggl| \biggl(\sum_{n \in S} |a_n|^p \biggr)^{1/p}- \biggl(\sum_{n \in S} |b_n|^p \biggr)^{1/p} \Biggr|.$$

\item \label{thm:MainTheorem:2} Consider $\{W_{\a}\}_{\a \in A}$ --- any family
of metric spaces such that $\diam(W_{\a})=1$ for all $\a \in A$.
Let $a = \{a_{\a}\}_{\a \in A}, a_{\a}\ge0$ and $b = \{b_{\a}\}_{\a \in A}, b_{\a}\ge0$ be such that $\sup_{\a \in A}a_{\a} < \infty$, $\sup_{\a \in A}b_{\a} < \infty$.

Let us denote $$X = (l^{\infty})\prod_{\b \in A} a_{\b}W_{\b}, \ Y = (l^{\infty})\prod_{\b \in A} b_{\b}W_{\b}.$$

Then $$2d_{GH}(X,Y) \le \sup_{S \subset A}\big| \sup_{\b \in S}a_{\b} - \sup_{\b \in S}b_{\b}\big|.$$
\end{enumerate}
\end{thm}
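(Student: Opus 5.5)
The plan is to take as the correspondence the graph of the identity-on-coordinates map $f\: X \to Y$, $\{w_n\} \mapsto \{w_n\}$, from Proposition~\ref{prop:DisDiagonal}. The underlying sets of $X$ and $Y$ are the same Cartesian product $\prod_n W_n$; only the metrics differ. Hence $f$ is a bijection with inverse $g = f^{-1}\: Y \to X$, given by the same rule. Consider the correspondence $R_{f,g} = \{(w, w) \mid w \in \prod_n W_n\}$, which is just the graph of $f$. By the definition of the Gromov--Hausdorff distance, $2d_{GH}(X,Y) \le \dis(R_{f,g})$.

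Next, I would compute $\dis(R_{f,g})$ using the Proposition of \cite{BogatyTuzhilin}, namely $\dis(R_{f,g}) = \max\{\dis f, \dis g, \codis(f,g)\}$. Since $g = f^{-1}$, we have $\dis g = \dis f$. For the codistortion, $\codis(f,g) = \sup_{x, y} |d_X(x, g(y)) - d_Y(f(x), y)|$. Substituting $y = f(\tilde x)$ gives $|d_X(x,\tilde x) - d_Y(f(x), f(\tilde x))|$, so $\codis(f,g) = \dis f$ as well. More directly, the distortion of a graph of a bijection is just $\dis f$. So $2d_{GH}(X,Y) \le \dis f$.

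Finally, Proposition~\ref{prop:DisDiagonal}, item~\ref{prop:DisDiagonal:1}, evaluates $\dis f$ exactly as
$$\sup_{S \subset \N} \Biggl| \biggl(\sum_{n \in S} |a_n|^p\biggr)^{1/p} - \biggl(\sum_{n\in S}|b_n|^p\biggr)^{1/p}\Biggr|,$$
which gives part~\ref{thm:MainTheorem:1}. Part~\ref{thm:MainTheorem:2} follows in the same way from item~\ref{prop:DisDiagonal:2}, with the $l^\infty$ expression.

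There is no real obstacle here. All the analytic work (the reduction to vertices of the cube in Proposition~\ref{prop:TheMainTechnical}) has already been absorbed into Proposition~\ref{prop:DisDiagonal}; in fact only its upper-bound half is needed. The only point to check is that $f$ is well defined and bijective, so that the graph is a genuine correspondence. This holds because $a, b \in l^p_+$ guarantees that both $l^p$-products are defined on the same set; in the $l^\infty$ case boundedness of $a$ and $b$ does the same. If some $a_n = 0$, then $a_n W_n$ is only a pseudometric factor. I would read the definition of the linear product as permitting this, or else note that collapsing such factors does not affect the argument.
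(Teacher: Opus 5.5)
Your proposal is correct and is exactly the paper's argument: the paper's proof is the one-line observation that $2d_{GH}(X,Y)\le\dis(f)$ for the coordinatewise identity $f$ of Proposition~\ref{prop:DisDiagonal}, whose upper-bound half yields the stated supremum. Your additional check is a detail the paper leaves implicit: $f$ is a bijection, so its graph is a correspondence whose distortion is exactly $\dis(f)$, and $\dis(g)$ and $\codis(f,g)$ contribute nothing beyond it.
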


\begin{proof}
Indeed, in the notation ~\ref{prop:DisDiagonal}, $2d_{GH}(X, Y) \le \dis(f)$ holds.
\end{proof}

\begin{thm}\label{thm:TheoremEquality}
\begin{enumerate}
\item Let $1 \le p <\infty$. Consider $\{W_n\}_{n \in \N}$ --- a fixed sequence of metric spaces such that $\diam(W_n)=1$ for all $n \in \N$. Let $a,b\in l^{p}_{+}$ be such that
$$\sup_{S \subset \N} \ \Biggl| \biggl(\sum_{n \in S} |a_n|^p \biggr)^{1/p}- \biggl(\sum_{n \in S} |b_n|^p \biggr)^{1/p} \Biggr| \le \bigl|\|a\|_{l^p}-\|b\|_{l^p} \bigr|.$$
Then $$2d_{GH}\Biggl((l^p)\prod_{n=1}^{\infty}a_n W_n, \ (l^p)\prod_{n=1}^{\infty}b_n W_n \Biggr)=\bigl|\|a\|_{l^p}-\|b\|_{l^p} \bigr|.$$
\item Let $\{W_{\a}\}_{\a \in A}$ be an arbitrary family of metric spaces such that $\diam(W_{\a})=1$ for all $\a \in A$. Let $a=\{a_{\a}\}_{\a \in A}, a_{\a}\ge0$, $b=\{b_{\a}\}_{\a \in A},b_{\a}\ge0$ such that $\sup_{\a \in A}a_{\a} < \infty$, $\sup_{\a \in A}b_{\a} < \infty$.
Let's assume that
$$ \sup_{S \subset A}\big| \sup_{\b \in S}a_{\b} - \sup_{\b \in S}b_{\b}\big| \le \big| \sup_{\b \in A}a_{\b} - \sup_{\b \in A}b_{\b}\big|. $$
Then $$2d_{GH}\Biggl((l^{\infty})\prod_{\b \in A}a_{\b} W_{\b}, \ (l^{\infty})\prod_{\b \in A}b_{\b}W_{\b} \Biggr) = \big| \sup_{\b \in A}a_{\b} - \sup_{\b \in A}b_{\b}\big|.$$
\end{enumerate}
\end{thm}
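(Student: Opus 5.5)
The argument is a two-sided squeeze: an upper bound from Theorem~\ref{thm:MainTheorem} and a lower bound from comparing diameters.

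\emph{Upper bound.} Theorem~\ref{thm:MainTheorem}, combined with the hypothesis, gives
$$2d_{GH}(X,Y)\le \sup_{S}\Bigl|\cdots\Bigr|\le \bigl|\|a\|_{l^p}-\|b\|_{l^p}\bigr|$$
in the $l^p$ case. In the $l^\infty$ case the same two steps give
$$2d_{GH}(X,Y)\le \bigl|\sup_{\b} a_\b-\sup_\b b_\b\bigr|.$$

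\emph{Lower bound.} We use the general estimate $2d_{GH}(X,Y)\ge|\diam X-\diam Y|$ from Section~1. This reduces the theorem to the identities
$$\diam\Bigl((l^p)\prod_n a_nW_n\Bigr)=\|a\|_{l^p},\qquad \diam\Bigl((l^\infty)\prod_\b a_\b W_\b\Bigr)=\sup_\b a_\b,$$
and the same for $b$.

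\emph{Diameter identities.}
\begin{itemize}
\item The inequality ``$\le$'' is immediate: every coordinate distance in $a_nW_n$ is at most $a_n$, so the product distance is at most the $l^p$-norm (respectively the supremum) of $\{a_n\}$.
\item The inequality ``$\ge$'' needs an approximation, since $\diam W_n=1$ need not be attained. Fix $\e>0$. In each $W_n$ pick points $w_n,\tilde w_n$ with $d_{W_n}(w_n,\tilde w_n)>1-\e$. These two points of the product realize distance at least $(1-\e)\|a\|_{l^p}$, respectively $(1-\e)\sup_\b a_\b$. Letting $\e\to0$ gives the claim.
\item The infinite product is unproblematic: for $a\in l^p_+$ the defining series converges. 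In the $l^\infty$ case the product is defined by the sup, so the same coordinatewise choice works over an arbitrary index set $A$. This uses the axiom of choice, which the paper implicitly assumes anyway.
\end{itemize}

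Combining the two bounds yields equality in both parts. I do not expect a real obstacle here. The only point requiring care is the diameter identity when the diameters of the $W_n$ are not attained, and this is handled by the $\e$-argument above.
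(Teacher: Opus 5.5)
Your proof is correct and matches the paper's argument: you get the upper bound from Theorem~\ref{thm:MainTheorem} combined with the hypothesis, and the lower bound from $|\diam X-\diam Y|\le 2d_{GH}(X,Y)$ using $\diam X=\|a\|_{l^p}$ and $\diam Y=\|b\|_{l^p}$ (respectively the suprema in the $l^\infty$ case). Your $\e$-argument for the diameter identities fills in a step the paper only asserts with ``note that''.
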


\begin{proof}
Let's prove the first point, the second point is similar. Thus, we denote
$X=(l^p)\prod_{n=1}^{\infty}a_n W_n$, $Y=(l^p)\prod_{n=1}^{\infty}b_n W_n$ and note that $\diam(X)=\|a\|_{l^p}$ and that $\diam(Y)=\|b\|_{l^p}$. By Theorem ~\ref{thm:MainTheorem} and based on the estimate
$$\bigl|\diam(X)-\diam(Y)\bigr|\le 2d_{GH}(X, Y),$$ we obtain $2d_{GH}(X,Y)=\bigl|\|a\|_{l^p}-\|b\|_{l^p} \bigr|$.
\end{proof}

\begin{rk}
Note that Theorems ~\ref{thm:MainTheorem} and ~\ref{thm:TheoremEquality} hold for arbitrary finite products, since the remaining factors can always be formally defined as follows: $X_n=Y_n=\{pt\}$ or $X_{\a}=Y_{\a}=\{pt\}$.
\end{rk}

\begin{cor}
Let $W_n$ be any sequence of metric spaces such that $\diam(W_n)=1$.
Let $a,b\in l^1_{+}$ be such that
for every $n \in \N$, $a_n \le b_n$.
Then
$$2d_{GH}\Biggl((l^{1})\prod_{n=1}^{\infty}a_{n} W_{n}, \ (l^{1})\prod_{n = 1}^{\infty}b_{n}W_{n} \Biggr) = \sum_{n=1}^{\infty}(b_n-a_n).$$
\end{cor}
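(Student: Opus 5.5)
The plan is to derive the corollary from part~1 of Theorem~\ref{thm:TheoremEquality} with $p=1$. For $a,b\in l^1_{+}$ the $l^1$-norms are $\|a\|_{l^1}=\sum_n a_n$ and $\|b\|_{l^1}=\sum_n b_n$. Since $a_n\le b_n$ for every $n$, we get
$$\bigl|\|a\|_{l^1}-\|b\|_{l^1}\bigr|=\sum_{n=1}^{\infty}(b_n-a_n),$$
and this is exactly the right-hand side of the claimed equality. So it suffices to check the hypothesis of Theorem~\ref{thm:TheoremEquality}, namely
$$\sup_{S\subset\N}\Bigl|\sum_{n\in S}a_n-\sum_{n\in S}b_n\Bigr|\le\sum_{n=1}^{\infty}(b_n-a_n).$$

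To verify this, fix an arbitrary $S\subset\N$. Both series $\sum_{n\in S}a_n$ and $\sum_{n\in S}b_n$ converge absolutely because $a,b\in l^1$, so the difference can be taken termwise:
$$\Bigl|\sum_{n\in S}a_n-\sum_{n\in S}b_n\Bigr|=\sum_{n\in S}(b_n-a_n).$$
Here the absolute value can be dropped because every term $b_n-a_n$ is nonnegative. Since these terms are nonnegative, enlarging $S$ to all of $\N$ only increases the sum, so
$$\sum_{n\in S}(b_n-a_n)\le\sum_{n=1}^{\infty}(b_n-a_n).$$
Taking the supremum over $S$ gives the required hypothesis, with equality attained at $S=\N$.

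Theorem~\ref{thm:TheoremEquality} then yields the claimed formula. There is no real obstacle. The only points that need care are the absolute convergence, which justifies splitting the series over $S$, and the sign condition $a_n\le b_n$, which is what allows the absolute value to be removed inside the sum.
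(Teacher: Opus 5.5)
Your proposal is correct and takes the same route as the paper, which also just applies Theorem~\ref{thm:TheoremEquality} with $p=1$ and notes that $a_n\le b_n$ makes its hypothesis hold. You spell out that check explicitly: since every $b_n-a_n\ge 0$, the sum $\sum_{n\in S}(b_n-a_n)$ is maximized at $S=\N$, where it equals $\|b\|_{l^1}-\|a\|_{l^1}$.
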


\begin{proof}
The condition $a_n \le b_n$ automatically guarantees that the conditions of Theorem ~\ref{thm:TheoremEquality} are satisfied.
\end{proof}

\begin{notation}
Denote by $S^1$ the metric space that is a circle with intrinsic metric, where $\diam(S^1)=1$.
\end{notation}

\begin{examp}
Let's estimate the Gromov--Hausdorff distance between the tori $X=(l^2)aS^1 \times bS^1$ and $Y=(l^2)cS^1 \times dS^1$.
By Theorem ~\ref{thm:TheoremEquality} we obtain that if $$\max\{|a-c|, |b-d|\} \le \bigl|\sqrt{a^2+b^2}-\sqrt{c^2+d^2} \bigr|,$$ then $$2d_{GH}(X,Y)=\bigl|\sqrt{a^2+b^2}-\sqrt{c^2+d^2} \bigr|.$$
For example, for $a=1, \ b=3, \ c=2, \ d=5$ this condition is satisfied and, accordingly,
$$2d_{GH}((l^2)S^1 \times 3S^1, \ (l^2)2S^1 \times 5S^1)=\sqrt{29}-\sqrt{10}.$$
\end{examp}

\end{subsection}

\section{Attainability of the Upper Bound for $l^{\infty}$-Products}
In Theorem ~\ref{thm:TheoremEquality}, we obtained sufficient conditions for the attainability of the lower bound in the inequality
$$\bigl|\diam(X)-\diam(Y)\bigr|\le2d_{GH}(X, Y) \le \max\{\diam(X),\diam(Y)\}.$$
Now we obtain a sufficient condition for attaining the upper bound. We begin with the following proposition.

\begin{prop}\label{prop:TheLast}
\begin{enumerate}
\item
Let $X$ and $Y$ be metric spaces such that $\diam(X)\le\diam(Y)<\infty$.
Suppose that for any $\e>0$ there exists a subset $Q_{\e} \subset Y$ such that $\#Q_{\e}>\#X$ and for all $x,y\in Q_{\e}$ we have $d(x,y)\ge \diam(Y)-\e$. Then $2d_{GH}(X,Y)=\diam(Y).$
\item
Let $X$ and $Y$ be metric spaces, and $\diam(Y) = \infty$.
Suppose that for any $C > 0$ there exists a subset $Q_{C} \subset Y$ such that $\#Q_{C}>\#X$ and for all $x,y\in Q_{C}$ we have $d(x,y)\ge C$. Then $d_{GH}(X,Y) = \infty$.
\end{enumerate}
\end{prop}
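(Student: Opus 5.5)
The plan is to prove the lower bound by a pigeonhole argument on correspondences; the upper bound in part 1 is the known general estimate $2d_{GH}(X,Y)\le\max\{\diam(X),\diam(Y)\}=\diam(Y)$. So it remains to show $\dis R\ge \diam(Y)-\e$ for every $R\in\cR(X,Y)$ and every $\e>0$; the infimum over $R$ then gives $2d_{GH}(X,Y)\ge\diam(Y)$.

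Fix $R\in\cR(X,Y)$ and $\e>0$, and take $Q_\e\subset Y$ with $\#Q_\e>\#X$. For each $q\in Q_\e$ choose, using surjectivity of the projection onto $Y$, a point $x(q)\in X$ with $(x(q),q)\in R$; this uses the axiom of choice. The map $q\mapsto x(q)$ goes from a set of strictly larger cardinality into $X$, so it is not injective. Hence there are $q\ne q'$ in $Q_\e$ with $x(q)=x(q')=:x$. Both $(x,q)$ and $(x,q')$ lie in $R$, and therefore
$$\dis R\ge \bigl|d_X(x,x)-d_Y(q,q')\bigr| = d_Y(q,q')\ge\diam(Y)-\e .$$
Letting $\e\to0$ gives $\dis R\ge \diam(Y)$, and so $2d_{GH}(X,Y)=\diam(Y)$. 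The hypothesis $\diam(X)\le\diam(Y)$ is used only for the matching upper bound.

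Part 2 is the same argument with $Q_C$ in place of $Q_\e$. For every $R$ and every $C>0$ we get $\dis R\ge C$, so $\dis R=\infty$ for every correspondence, and hence $d_{GH}(X,Y)=\infty$.

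The only delicate point is the cardinality step. The statement requires $\#Q_\e>\#X$ strictly, which is exactly what forces the chosen map $Q_\e\to X$ to fail injectivity for possibly infinite cardinals. With that in place, the rest is immediate.
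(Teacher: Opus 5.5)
Your proof is correct and is essentially the paper's argument. The paper writes $2d_{GH}(X,Y)=\inf_{f,g}\max\{\dis(f),\dis(g),\codis(f,g)\}\ge\inf_{g}\dis(g|_{Q_\e})$ and uses that $g|_{Q_\e}$ cannot be injective; your selection map $q\mapsto x(q)$ plays the role of $g$, applied directly to an arbitrary correspondence instead of going through the $R_{f,g}$ representation.
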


\begin{proof}
We will only prove the first point, since the second point is similar.
Clearly, $2d_{GH}(X,Y)\le\max\{\diam(X),\diam(Y)\}=\diam(Y)$, so it suffices to prove that $2d_{GH}(X,Y)\ge \diam(Y)$.
So,
\begin{multline*}
2d_{GH}(X,Y)=\inf_{R \in \cR(X, Y)}\dis(R)
=\inf_{f\:X\to Y}\inf_{g\:Y\to X} \max\{\dis(f), \dis(g), \codis(f, g)\} \ge \\
\
\inf_{g\:Y \to X}\dis(g) \ge \inf_{g\:Y \to X} \dis(g|_{Q_{\e}})=\inf_{g\:Q_{\e} \to X} \dis(g) = \\
\
= \inf_{g\: Q_{\e} \to X} \sup_{x, y \in Q_{\e}} \biggl|d_{Y}(x,y)-d_X \bigl(g(x),g(y)\bigr) \biggr| \ge \diam(Y)-\e.
\end{multline*}
The last inequality is explained by the fact that the mapping $g\:Q_{\e}\to X$ cannot be injective, so there exist $x, y\in Q_{\e}$ such that $x \ne y$ and $g(x) = g(y)$. Consequently, for these $x, y$, we have
$$\biggl|d_{Y}(x,y)-d_X \bigl(g(x),g(y)\bigr) \biggr| \ge \diam(Y)-\e.$$
Passing to the limit as $\e \to 0$, we obtain $2d_{GH}(X,Y) \ge \diam(Y)$.
\end{proof}

\begin{cor}
Let $X,Y$ be metric spaces, $\diam(X)\le\diam(Y)$, and $\diam(Y)\D_{\kappa} \subset Y$, where $\#X<\kappa$. Then $2d_{GH}(X,Y)=\diam(Y)$.
\end{cor}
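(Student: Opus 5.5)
The corollary follows directly from part~1 of Proposition~\ref{prop:TheLast}; the task is to produce the required sets $Q_\e$. Put $D=\diam(Y)$. The hypothesis $D\,\D_{\kappa}\subset Y$ means there is a subset $Q\subset Y$ with $\#Q=\kappa$ such that $d_Y(y,y')=D$ for all distinct $y,y'\in Q$.

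\emph{Step 1: build $Q_\e$.} For every $\e>0$ take $Q_\e=Q$. Then $\#Q_\e=\kappa>\#X$, and any two distinct points of $Q_\e$ are at distance $D\ge D-\e$. The hypothesis $\diam(X)\le\diam(Y)$ is assumed directly. Hence all conditions of part~1 of Proposition~\ref{prop:TheLast} hold and $2d_{GH}(X,Y)=\diam(Y)$.

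\emph{Step 2: the degenerate cases.} Part~1 needs $\diam(Y)<\infty$. If $\diam(Y)=\infty$, then $\infty\D_\kappa$ is not a metric space, so the statement can only be read with $D$ finite; I will note this. If $D=0$, then $Y$ is a single point. Since $\#X\ge1$, the condition $\#X<\kappa$ forces $\kappa\ge2$. A zero-scaled $\D_\kappa$ with $\kappa\ge2$ is not a metric space either, so this case does not occur in a meaningful reading, or it reduces to the trivial equality $d_{GH}(\{pt\},\{pt\})=0$.

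\emph{Where care is needed.} There is no real obstacle. The one point to check is the requirement in Proposition~\ref{prop:TheLast} that distinct points of $Q_\e$ be at distance at least $\diam(Y)-\e$. This holds with equality for all $\e$, which is exactly what the embedding $D\,\D_\kappa\subset Y$ supplies. The non-injectivity argument inside the proposition then applies verbatim: any map $Q\to X$ must glue two points of $Q$, because $\#Q>\#X$.
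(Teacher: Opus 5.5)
Your proposal is correct and is the paper's proof: it likewise takes $Q_\e$ to be the copy of $\diam(Y)\D_{\kappa}$ inside $Y$ for every $\e>0$ and applies part~1 of Proposition~\ref{prop:TheLast}. Your remarks on the degenerate cases $\diam(Y)\in\{0,\infty\}$ are harmless additions that the paper leaves implicit.
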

\begin{proof}
For $Q_{\e}$, we can take $\diam(Y)\D_{\kappa}$.
\end{proof}

\begin{notation}[\cite{Engelking}]
For an arbitrary topological space $X$, let $d(X)$ denote its \emph{density} --- the smallest cardinality of its dense subsets.
\end{notation}

\begin{thm}\label{thm:TheLast}
\begin{enumerate}
\item
Let $X$ be a bounded metric space of density $d(X)$. Then, for any set $A$ of cardinality $\#A\ge d(X)$, we have
$$2d_{GH}\Bigl(X, (l^{\infty})\prod_{\a \in A}X \Bigr) = \diam(X).$$
\item
Let $X$ be an unbounded metric space of density $d(X)$. Then, for any set $A$ of cardinality $\#A\ge d(X)$, we have
$$d_{GH}\Bigl(X, (l^{\infty})\prod_{\a \in A}X \Bigr) = \infty.$$
\end{enumerate}
\end{thm}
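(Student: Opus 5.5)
The plan is to follow the scheme of Proposition~\ref{prop:TheLast}, replacing the cardinality argument on $X$ itself by one on a dense subset of $X$. Write $Y=(l^{\infty})\prod_{\a\in A}X$. In part~1 the diagonal embedding shows $\diam(Y)=\diam(X)$, and the product metric gives $\diam(Y)\le\diam(X)$, so $\diam(Y)=\diam(X)$. The general estimate $2d_{GH}(X,Y)\le\max\{\diam X,\diam Y\}$ then gives the upper bound. For the lower bound I will use, as in the proof of Proposition~\ref{prop:TheLast},
$$2d_{GH}(X,Y)=\inf_{f,g}\max\{\dis f,\dis g,\codis(f,g)\}\ge\inf_{g\:Y\to X}\dis(g).$$
It therefore suffices to show that every map $g\:Y\to X$ has $\dis(g)\ge\diam(X)-\e-2\delta$ for arbitrary $\e,\delta>0$. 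If $X$ is a single point there is nothing to prove, so assume it is not.

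Construction. Fix $\e>0$ and choose $x,x'\in X$ with $d(x,x')\ge\diam(X)-\e$. Let $Q=\{x,x'\}^A\subset Y$. Any two distinct elements of $Q$ differ in at least one coordinate, and each coordinate contributes either $0$ or $d(x,x')$. Hence the $l^\infty$-distance between distinct elements of $Q$ is exactly $d(x,x')\ge\diam(X)-\e$. By Cantor's theorem, $\#Q=2^{\#A}>\#A\ge d(X)$.

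Main step. Proposition~\ref{prop:TheLast} cannot be applied verbatim, because we only know $\#Q>d(X)$, not $\#Q>\#X$; for instance, $\#X$ may equal $2^{d(X)}$. So instead:
\begin{enumerate}
\item Fix a dense set $D\subset X$ with $\#D=d(X)$, and fix $\delta>0$.
\item Given $g\:Y\to X$, define $h\:Q\to D$ by choosing $h(q)\in D$ with $d(g(q),h(q))<\delta$, which is possible by density.
\item Since $\#Q>\#D$, the map $h$ is not injective. So there are $q\ne q'$ in $Q$ with $h(q)=h(q')$, hence $d(g(q),g(q'))<2\delta$.
\item Therefore $\dis(g)\ge d_Y(q,q')-d(g(q),g(q'))>\diam(X)-\e-2\delta$.
\end{enumerate}
Letting $\e,\delta\to0$ gives $2d_{GH}(X,Y)\ge\diam(X)$. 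When $d(X)$ is finite, $X$ is finite and one can simply take $D=X$; the argument is unchanged.

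Part~2 goes the same way. For any $C>0$, pick $x,x'$ with $d(x,x')\ge C$ and build $Q=\{x,x'\}^A$ as above. The same density argument gives $\dis(g)\ge C-2\delta$ for every $g\:Y\to X$, so $d_{GH}(X,Y)\ge (C-2\delta)/2$ for all $C$, that is, $d_{GH}(X,Y)=\infty$.

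I expect the only real obstacle to be exactly the cardinality issue in the main step: comparing $\#Q$ with $\#X$ can fail. The remedy is to compare $\#Q$ with the density $d(X)$ and pay an extra error of $2\delta$ coming from approximating $g$ by a map into the dense set.
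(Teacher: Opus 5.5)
Your proof is correct and follows the paper's argument. It uses the same set $Q=\{x,x'\}^A$ with $\#Q=2^{\#A}>\#A\ge d(X)$ and the same pigeonhole argument against a dense subset $D$ of cardinality $d(X)$. The only difference is packaging: the paper replaces $X$ by $D$ via $d_{GH}(X,D)=0$ and the triangle inequality, then applies Proposition~\ref{prop:TheLast} verbatim to the pair $\bigl(D,(l^{\infty})\prod_{\a\in A}X\bigr)$, whereas you inline that reduction by approximating $g$ with a map into $D$ at a cost of $2\delta$.
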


\begin{proof}
We will prove only the first point, since the second is similar.
Denote by $Y\subset X$ an everywhere dense subset of cardinality $\#Y=d(X)$. Then, it is well known that $\diam(Y)=\diam(X)$ and $d_{GH}(X,Y)=0$.
Thus, for any $\e>0$, consider $x,y\in Y$ such that $d_{X}(x,y)\ge\diam(X)-\e$.
Denoting $$Q_{\e}=(l^\infty)\ \prod_{\a \in A}\{x,y\}\subset (l^{\infty})\prod_{\a \in A}X$$ and taking into account that $\#Q_{\e}=\exp(\#A)>\#A\ge d(X)=\#Y$, by Proposition ~\ref{prop:TheLast} we obtain that
$$2d_{GH}\Bigl(X, (l^{\infty})\prod_{\a \in A}X \Bigr) = 2d_{GH}\Bigl(Y, (l^{\infty})\prod_{\a \in A}X \Bigr) = \diam(X).$$
\end{proof}

\begin{cor}
Let $X$ be a metric space. Then
$$2d_{GH}\Bigl(X, (l^{\infty})\prod_{y \in X}X \Bigr) = \diam(X).$$
\end{cor}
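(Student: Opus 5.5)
This follows from Theorem~\ref{thm:TheLast} by taking $A = X$ as the index set. The one thing to check is that $\#X \ge d(X)$. This holds because $X$ is trivially dense in itself, and $d(X)$ is by definition the least cardinality of a dense subset. With $A=X$, the hypothesis $\#A\ge d(X)$ of Theorem~\ref{thm:TheLast} is therefore satisfied.

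Next I split into cases according to whether $X$ is bounded. If $X$ is bounded, the first part of Theorem~\ref{thm:TheLast} gives $2d_{GH}\bigl(X,(l^\infty)\prod_{y\in X}X\bigr)=\diam(X)$ directly. If $X$ is unbounded, then $\diam(X)=\infty$, and the second part gives $d_{GH}=\infty$. In that case the equality $2d_{GH}=\diam(X)=\infty$ holds in the extended sense.

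I also need the $l^\infty$-product to be well defined. When $X$ is bounded, $\sup_{y}\diam(X)=\diam(X)<\infty$, as the definition requires. In the unbounded case, the sup-metric may take the value $\infty$. I will interpret the statement the same way Theorem~\ref{thm:TheLast}(2) does, so no new argument is needed.

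The main (mild) obstacle is the degenerate case of the empty space. This is excluded because metric spaces in $\cGH$ are nonempty. A one-point $X$ is also fine: $\diam(X)=0$ and the product is again a point. Apart from these checks, the proof is a single invocation of Theorem~\ref{thm:TheLast}.
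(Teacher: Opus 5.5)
Your proposal is correct and matches the paper's intended derivation (the paper gives no separate proof): apply Theorem~\ref{thm:TheLast} with index set $A = X$, using $\#X \ge d(X)$ because $X$ is dense in itself. Your handling of the unbounded case (via part~2 of that theorem, read as $\infty=\infty$) and of the one-point case is consistent with how the paper treats them.
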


\begin{cor}
Let $X$ be a separable metric space. Then
$$2d_{GH}\Bigl(X, (l^{\infty})\prod_{n=1}^{\infty}X \Bigr) = \diam(X).$$
\end{cor}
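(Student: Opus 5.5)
This corollary is a direct specialization of Theorem~\ref{thm:TheLast} with $\#A = \aleph_0$. The plan is to check that $\N$ qualifies as an index set, then quote the theorem, splitting into the bounded and unbounded cases as the theorem does.

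\emph{Step 1 (the cardinality hypothesis).} Since $X$ is separable, it has a countable dense subset, so $d(X)\le\aleph_0 = \#\N$. Taking $A=\N$ therefore satisfies the hypothesis $\#A\ge d(X)$ of Theorem~\ref{thm:TheLast}.

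\emph{Step 2 (bounded case).} If $X$ is bounded, the product $(l^\infty)\prod_{n=1}^\infty X$ is well defined. The first point of Theorem~\ref{thm:TheLast} then gives
$$2d_{GH}\Bigl(X,(l^\infty)\prod_{n=1}^{\infty}X\Bigr)=\diam(X)$$
directly.

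\emph{Step 3 (unbounded case).} If $X$ is unbounded, $\diam(X)=\infty$. The second point of Theorem~\ref{thm:TheLast} gives $d_{GH}=\infty$, so the claimed equality still holds in the extended sense.

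\emph{Step 4 (finite $X$).} The only point needing care is when $X$ is finite, since then $d(X)$ is finite and one must still be sure the theorem applies. Nothing breaks: in the proof of Theorem~\ref{thm:TheLast}, one has $\#Q_\e = 2^{\#A} = 2^{\aleph_0} > \#Y$ whenever $\#A\ge d(X)$. This holds as long as $X$ has at least two points, because then the two chosen points $x\ne y$ give $\#\{x,y\}^{\N}=2^{\aleph_0}$.

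\emph{Step 5 (one-point $X$).} If $X$ is a single point, both sides equal $0$: the product is again a point, and $\diam(X)=0$.

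I expect no real obstacle here. The only things to watch are the degenerate cases (finite or single-point $X$) and the convention that the $l^\infty$-product of an unbounded space is interpreted as in part 2 of the theorem.
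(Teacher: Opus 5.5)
Your proposal is correct and follows the paper's route: the corollary is Theorem~\ref{thm:TheLast} specialized to $A=\N$, using that separability gives $d(X)\le\aleph_0=\#\N$. Treating the one-point case separately is a sensible extra, because there the set $Q_\e$ in the theorem's proof collapses to a single point and the strict inequality $\#Q_\e>\#Y$ fails, although the equality $0=0$ holds trivially.
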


\section{Gromov--Hausdorff Topological Distance}

\begin{dfn}
Let $R$ be an equivalence relation on the metric space $(X, d)$.
\emph{The semimetric $d_R$ of the quotient space} is defined by the formula
$$d_R(x, y) = \inf \Bigl\{ \sum_{i = 1}^{k} d(p_i, q_i) \: p_0 = x, q_k = y, k \in \N \Bigr\},$$
where the infimum is taken over all sets $\{p_i\}$ and $\{q_i\}$ such that the point $q_i$ is $R$-equivalent to the point $p_{i + 1}$ for all $i = 1, \dots, k - 1$.
\end{dfn}

The question arises: can the Gromov–Hausdorff distance between arbitrary metrizable spaces be defined as a topological characteristic?
If we consider the induced semimetric on the space of homeomorphism classes $\cGH$, it is easy to verify that this semimetric is trivial (identically zero).
Indeed, for any metrizable space $X$, we can always introduce a metric $d_{\e}$ that generates this topology, in which $\diam(X)<\e$ for any given $\e > 0$. Since the distance from the space $\{pt\}$ to $X$ will not exceed $\frac{1}{2} \diam(X)$, by virtue of the triangle inequality, we obtain that this distance is trivial.

It is natural to assume that the triviality is due to the possibility of an infinitely decreasing diameter. We will attempt to eliminate this effect by restricting the factorization to subsets of compact spaces of fixed diameter (e.g., $\diam(X) = 1$). It seems likely that in this class, the induced semimetric will allow us to distinguish non-homeomorphic spaces. However, as will be shown below, even this restriction does not prevent distance degeneration.

Recall that $\cM$ denotes the space of all metric compact spaces.
Denote by $\cS$ the sphere
$$\cS = \bigl\{ X \in \cM \: \diam(X) = 1 \bigr\}.$$
We introduce the following equivalence relation $R$: metric spaces $X, Y \in \cS$ are $R$-equivalent if they are homeomorphic, and we denote the resulting semimetric, the metric on the quotient space, by $D$.

\begin{lem}\label{lem:lem}
Let $X, Y \in \cS$.
Denote $X \times Y = (l^{\infty}) X \times Y$.
Then $D(X, X \times Y)$ = 0.
\end{lem}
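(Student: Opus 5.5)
The idea is to connect $X$ and $X\times Y$ inside $\cS$ by a chain in which each step is either a homeomorphism (cost $0$ in $D$) or a small Gromov--Hausdorff move. Since $D$ is the quotient semimetric of $d_{GH}$ restricted to $\cS$ under the homeomorphism relation, we have $D\le d_{GH}$, and $D$ vanishes along homeomorphisms. So it suffices to show that for every $\e>0$ there are spaces $Z,Z'\in\cS$ with $Z$ homeomorphic to $X$, $Z'$ homeomorphic to $X\times Y$, and $d_{GH}(Z,Z')\le\e$. Then $D(X,X\times Y)\le D(X,Z)+d_{GH}(Z,Z')+D(Z',X\times Y)\le\e$. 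Here the hypothesis $\diam(X)=\diam(Y)=1$ gives $\diam\bigl((l^\infty)X\times Y\bigr)=1$, so $X\times Y\in\cS$.

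The construction shrinks the second factor. For $0<\e\le1$, consider $Z'=(l^\infty)X\times \e Y$. The metric $\e d_Y$ induces the same topology as $d_Y$, so $Z'$ is homeomorphic to $(l^\infty)X\times Y$. Moreover $\diam(Z')=\max\{\diam X,\e\diam Y\}=\max\{1,\e\}=1$, so $Z'\in\cS$ as well. Take $Z=X$, or equivalently $(l^\infty)X\times\{pt\}$, which is isometric to $X$.

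Next comes the estimate. I will apply the upper bound of Theorem~\ref{thm:EstimationInGeneralCase_L_infty}, which is elementary for two factors, to the families $(X,\e Y)$ and $(X,\{pt\})$. This gives
\[
d_{GH}\bigl((l^\infty)X\times\e Y,\ (l^\infty)X\times\{pt\}\bigr)\le\max\bigl\{d_{GH}(X,X),\,d_{GH}(\e Y,\{pt\})\bigr\}=\tfrac{\e}{2}.
\]
Chaining these pieces gives $D(X,X\times Y)\le\e/2$, and letting $\e\to0$ yields $D(X,X\times Y)=0$.

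The only point needing care, which I expect to be the mild obstacle, is justifying that $D$ is bounded by $d_{GH}$ between any two points of $\cS$, and that homeomorphic spaces in $\cS$ are at $D$-distance $0$. Both follow directly from the definition of $d_R$: first take $k=1$, then take a chain whose steps jump across an equivalence at zero cost. It is also important that every intermediate space stays in $\cS$ (compact, diameter $1$), which is exactly why the second factor is shrunk rather than the first.
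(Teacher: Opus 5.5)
Your proposal is correct and is essentially the paper's proof. The paper also replaces $X\times Y$ by the homeomorphic space $(l^\infty)X\times\lambda Y$ and applies the upper bound of Theorem~\ref{thm:EstimationInGeneralCase_L_infty} to the families $(X,\lambda Y)$ and $(X,\{pt\})$, obtaining $D(X,X\times Y)\le\lambda/2$, and then lets $\lambda\to0$. Your explicit checks that the intermediate spaces lie in $\cS$ and that $D\le d_{GH}$, with homeomorphic spaces at $D$-distance zero, are details the paper leaves implicit.
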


\begin{proof}
Fix $0 < \lambda < 1$.
Note that since the spaces $X \times Y$ and $X \times \lambda Y$ are homeomorphic, then by Theorem \ref{thm:EstimationInGeneralCase_L_infty} we have $D(X, X \times Y) = D(X \times \{pt\}, X \times \lambda Y) \le d_{GH}(X \times \{pt\}, X \times \lambda Y) \le
\max\{d_{GH}(X, X), d_{GH}(\{pt\}, \lambda Y)\} = \diam(\lambda Y) / 2 = \lambda/2$.
Letting $\lambda \to 0$, we obtain the desired result.
\end{proof}

\begin{thm}
The semimetric $D$ of the quotient space $\cS$ under the equivalence relation $R$ is identically zero.
\end{thm}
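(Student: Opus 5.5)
The plan is to reduce the statement to Lemma~\ref{lem:lem} together with the symmetry of $D$ and the triangle inequality for the semimetric of the quotient space. Take arbitrary $X, Y \in \cS$; the goal is $D(X,Y)=0$. The natural intermediate object is the product $X \times Y = (l^{\infty}) X \times Y$. First, check that this product again lies in $\cS$. It is compact, being a finite product of compacta with the max-metric. Its diameter is $\max\{\diam X, \diam Y\} = 1$. So $X\times Y \in \cS$, and $D(X, X\times Y)$ is meaningful.

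Second, apply Lemma~\ref{lem:lem} to the pair $(X,Y)$, which gives $D(X, X\times Y)=0$. Apply it again to the pair $(Y,X)$, which gives $D(Y, Y\times X)=0$.

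Third, note that $Y\times X$ and $X\times Y$ are isometric via the coordinate swap, since the $l^\infty$-metric is symmetric in the factors. Hence $d_{GH}(Y\times X, X\times Y)=0$, and therefore $D(Y\times X, X\times Y)=0$. This also follows simply because isometric spaces are homeomorphic, hence $R$-equivalent.

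Finally, the triangle inequality for $D$ gives
$$D(X,Y)\le D(X,X\times Y)+D(X\times Y, Y\times X)+D(Y\times X, Y)=0.$$
The triangle inequality is immediate from the definition of $d_R$ by concatenating chains, and $d_R$ is symmetric.

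The only step I expect to need care is the justification that $D$ satisfies the triangle inequality and that $D \le d_{GH}$ on $\cS$. Both follow directly from the chain definition of $d_R$: a single-step chain gives $D\le d_{GH}$, and concatenation of chains gives subadditivity. Everything else is already contained in Lemma~\ref{lem:lem}.
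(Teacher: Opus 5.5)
Your proposal is correct and is essentially the paper's proof. The paper writes $D(X,Y)\le D(X,X\times Y)+D(X\times Y,Y)=0$. Your extra middle term $D(X\times Y,Y\times X)=0$, together with the second application of Lemma~\ref{lem:lem} to $(Y,X)$, just makes explicit the coordinate swap that the paper leaves implicit when it asserts $D(X\times Y,Y)=0$. Your checks that $X\times Y\in\cS$, that $D$ satisfies the triangle inequality, and that $D$ vanishes on $R$-equivalent spaces are correct details the paper omits.
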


\begin{proof}
In the notation of Lemma \ref{lem:lem}, for any metric spaces $X, Y \in \cS$, we have
$$D(X, Y) \le D(X, X \times Y) + D(X \times Y, Y) = 0.$$
\end{proof}

\end{document}